\documentclass[11, reqno]{amsart}

\usepackage{amsfonts}
\usepackage{amscd}
\usepackage{amsmath, mathrsfs, amssymb, mathtools}
\usepackage{amsthm}
\usepackage{setspace}
\usepackage{hyperref}
\usepackage{color}
\usepackage{epsfig}
\usepackage{here}
\usepackage{graphicx}
\usepackage[all]{xy}
\usepackage{psfrag}
\usepackage{graphicx,transparent}
\usepackage{enumerate}
\usepackage{caption}

\theoremstyle{plain}
\newtheorem{theorem}{Theorem}[section]
\newtheorem{lemma}[theorem]{Lemma}
\newtheorem{proposition}[theorem]{Proposition}
\newtheorem{corollary}[theorem]{Corollary}

\newtheorem{problem}[theorem]{Problem}
\theoremstyle{definition}

\newcommand{\modm}{\cal M}
\newcommand{\cal}{\mathcal}
\newcommand{\br}{\mathbb{R}}
\newcommand\cM{\mathcal{M}}
\newcommand\cD{\mathcal{D}}
\newcommand{\cd}{\mathcal{D}}
\newcommand{\wc}{\mathfrak{wc}}
\newcommand{\cb}{\mathcal{B}}
\newcommand{\cc}{{\mathcal C}}
\newcommand{\cp}{{\cal P}}

\newcommand{\Mbar}[2]{\overline{\mathcal{M}}_{{#1}, {#2}}}
\newcommand{\bq}{\mathbb{Q}}

\begin{document}

\title[A tale of two volumes]{A tale of two volumes of moduli spaces: Weil--Petersson and Masur--Veech}

\date{\today}

\subjclass[2020]{Primary: 14H10; Secondary: 14C17, 32G15,  37D40}

\author{Dawei Chen}
\address{Department of Mathematics, Boston College, Chestnut Hill, MA 02467, USA}
\email{dawei.chen@bc.edu}

\author{Scott Mullane}
\address{School of Mathematics and Statistics, University of Melbourne, Australia}
\email{mullanes@unimelb.edu.au}

\begin{abstract}
Weil--Petersson and Masur--Veech volumes measure the sizes of moduli spaces of Riemann surfaces equipped with hyperbolic and flat metrics, respectively. Over the past several decades, the computation of these volumes has inspired remarkable developments in combinatorial enumeration, intersection theory, and recursion relations. In this survey, we review key results, methods, open problems, as well as interesting parallels that emerge in the approaches to computing both types of volumes. 
\end{abstract}

\maketitle

\setcounter{tocdepth}{1}
\tableofcontents

\section{Introduction}
\label{sec:intro}

Moduli spaces arise throughout mathematics as parameter spaces for geometric objects sharing similar structures. When the objects in question carry a metric, the corresponding moduli spaces naturally admit measures that quantify their sizes by volumes. The study of these volumes brings together diverse themes---combinatorial enumeration, modular compactifications,  intersection theory, and recursion relations---and serves as a meeting point of geometry, dynamics,  combinatorics, and mathematical physics. 

This survey focuses on two fundamental notions of volume on moduli spaces of Riemann surfaces: the {\em Weil--Petersson} and {\em Masur--Veech volumes}. Weil--Petersson volumes are associated with hyperbolic metrics on Riemann surfaces, possibly with cusps, conical singularities, or geodesic boundaries. Masur--Veech volumes arise from flat metrics with conical singularities induced by holomorphic differentials on Riemann surfaces. We review the main ideas underlying their definitions and computations, trace key developments in their study, and compare the methods that have emerged in the two settings. Our aim is to highlight their analogies as well as their contrasts, and to encourage further interactions between these parallel theories. Since the topics covered in this survey lie at the intersection of several fields, we focus on conveying the geometric intuition behind the relevant concepts and ideas, while minimizing technical details without sacrificing precision, in order to maintain a natural and fluent exposition. We hope that readers from diverse backgrounds will find the article accessible and rewarding, even upon a quick reading.  

\subsection*{Acknowledgements} 

Part of this survey is based on a talk delivered by the first-named author at the Richmond Geometry Meeting 2021. The authors are grateful to Marco Aldi, Allison Moore, and Nicola Tarasca for organizing this wonderful conference series and for their kind invitation to contribute to the proceedings. The first-named author is support by the National Science Foundation under Grant DMS-2301030 and by Simons Travel Support for Mathematicians. The second-named author is  supported by DECRA Grant DE220100918 from the Australian Research Council.

\section{Weil--Petersson volumes}
\label{sec:WP}

In this section we provide an overview of the definition of, the methods to compute, relations between, and the high genus asymptotics of Weil--Petersson volumes of moduli spaces.

\subsection{Moduli of hyperbolic surfaces}

The moduli space of hyperbolic surfaces with labeled boundary components is defined as
\begin{align*}
\modm_{g,n}^{\mathrm{hyp}}(L_1&,...,L_n)\coloneqq  \Big\{(\Sigma,\beta_1,...,\beta_n)\mid \Sigma \text{ oriented hyperbolic surface},\\
&\text{ genus }\Sigma=g, \partial \Sigma=\sqcup\beta_j, 2\cosh(L_j/2)=-\mathrm{tr}(A_j)\Big\}/\sim
\end{align*} 
for $g,n$ satisfying $2g-2+n>0$ and $L_j\in\br_{\geq0}\cup i(0,2\pi)$, where $A_j\in {\rm PSL}(2,\br)$ represents the conjugacy class defined by the holonomy of the metric around the boundary component $\beta_j$.  The quotient is defined by isometries preserving each $\beta_j$ and its associated conjugacy class.  A boundary component is geometrically realized in the hyperbolic surface by a geodesic of length $L_j\in\br_{>0}$, a cusp when $L_j=0$, or a cone angle $\theta_j=|L_j|$ when $L_j=i\theta_j\in(0,2\pi)i$ is purely imaginary, giving an incomplete hyperbolic metric on the punctured surface.

Further, in the case that $\mathbf{L}\in i[0,2\pi)^n$, the Gauss--Bonnet theorem imposes the condition that
\begin{equation}\label{eqn:GB}
\sum_{j=1}^n\theta_j=\sum_{j=1}^n|L_j|<2\pi(2g-2+n)  
\end{equation}
for the moduli space to be non-empty. 

There is a natural homeomorphism between $\modm_{g,n}^{\mathrm{hyp}}(\mathbf{0})$ and $\modm_{g,n}$, the moduli space of genus $g$ curves with $n$ labeled points, that sends a hyperbolic surface to the corresponding conformal class with inverse map defined by uniformization. Mirzakhani showed that, via a grafting procedure, this generalizes to a homeomorphism between $\modm_{g,n}^{\text{hyp}}(\mathbf{L})$ and $\modm_{g,n}$ for $L_i\in \br_{\geq0}$. In the case that $\mathbf{L}\in i[0,2\pi)^n$, McOwen \cite{McOPoi} proved the existence of a unique conical metric $h(\mathbf{L})$ with prescribed cone angles $\theta_j=|L_j|$, $j=1,...,n$ in any fixed conformal class, providing the homeomorphism for these cases.

\begin{problem}
    \label{prob:mixed}
    In \S \ref{subsec:higherangles} the definition of $\modm_{g,n}^{\mathrm{hyp}}(\mathbf{L})$ is extended to include $L_j\in i\br_{\geq 0}$ which is geometrically realized as a conical singularity of angle $\theta_j=-iL_j$. While closely related recent work \cite{gonchar, chek, huangtel} has considered moduli of hyperbolic surfaces with different boundary behavior. An interesting question is to provide a geometrically meaningful definition of the moduli space for $L_j\in \mathbb{C}\setminus (\br_{\geq0}\cup i\br_{\geq 0})$ and extend the domain of definition of $\modm_{g,n}^{\mathrm{hyp}}(\mathbf{L})$. 
\end{problem}

\subsection{The Weil--Petersson metric and volume}\label{subsec:WPmetric}

The moduli space of hyperbolic surfaces  $\modm_{g,n}^{\text{hyp}}(\mathbf{L})$ comes equipped with a finite measure which can be constructed in a number of ways including:
\begin{itemize}
\item via Fenchel--Nielsen coordinates when they exist \cite{WolWei};
\item from the top power of a natural symplectic form $\omega^{WP}_\mathbf{L}$, defined via the Weil--Petersson construction of a Hermitian metric or via a natural symplectic form on the character variety \cite{GolInv}; or
\item the Reidemeister torsion of the natural complex defined by the associated flat $\text{PSL}(2,\br)$ connection \cite{WitQua}.
\end{itemize}  

We first consider the first two constructions.

\subsubsection{Fenchel--Nielsen coordinates} A \emph{pants decomposition} of a hyperbolic surface in $\modm_{g,n}^{\text{hyp}}(\mathbf{L})$ with $L_j\in\br_{\geq0}$ is a collection of geodesics on the surface that decompose the surface into genus zero surfaces with three geodesic boundary components. We denote the lengths of these geodesics by $\ell_1,\dots,\ell_{3g-3+n}\in\br_+$. A genus zero hyperbolic surface with three geodesic boundary components is uniquely determined by the lengths of the geodesics, though such a collection of genus zero surfaces is not sufficient to determine the surface in $\modm_{g,n}^{\text{hyp}}(\mathbf{L})$. The twist parameters $\tau_1,\dots,\tau_{3g-3+n}\in\br$ determine how each pair of geodesics are glued together and these two sets provide Fenchel--Nielsen coordinates for Teichm\"{u}ller space via a homeomorphism 
$$\mathcal{T}_{g,n}^{\text{hyp}}(L_1,...,L_n)\longrightarrow \br_+^{3g-3+n}\times \br^{3g-3+n}$$
under the natural topology on Teichm\"{u}ller space. The canonical symplectic form
$$\omega= \sum_{j=1}^{3g-3+n}d\ell_j\wedge d\tau_j  $$
then miraculously descends~\cite{WolWei} to a symplectic form on the moduli space $\modm_{g,n}^{\text{hyp}}(\mathbf{L})$ obtained as the quotient of Teichm\"{u}ller space under the action of the mapping class group. The resulting form is known as the Weil--Petersson symplectic form and denoted by $\omega_{\rm WP}({\bf{L}})$.

This can be generalized to define the Weil--Petersson symplectic form $\omega_{\bf{L}}$ for any choice of $\bf{L}$ such that the hyperbolic surfaces parameterised admit pants decompositions. By providing a McShane identity on hyperbolic surfaces with cone angles, it was shown in~\cite{TWZGen} that a pants decomposition exists when $L_j\in\br_{\geq0}\cup i[0,\pi)$ and hence this construction generalizes. However, in \cite{ANoVol} it was shown that when some $L_j\in i(\pi,2\pi)$, pants decompositions are no longer guaranteed in which case, a different path to defining the Weil--Petersson symplectic form is required.

\begin{problem}
\label{prob:coord}
Find a local coordinate system for $\modm_{g,n}^{\text{hyp}}(\mathbf{L})$ in the cases where pants decompositions and hence Fenchel--Nielsen coordinates do not exist.
\end{problem}

\subsubsection{The Weil--Petersson metric} 
The cotangent space at a point $(C,p_1,\dots,p_n)\in\modm_{g,n}$ is identified as
\[T^*_{[C,p_1,\dots,p_n]}\modm_{g,n}=H^1(C,T_C(-p_1-\cdots-p_n))^\vee\cong H^0(C,K^{\otimes 2}_C(p_1+\cdots +p_n)),\]
the vector space of meromorphic quadratic differentials with at worst simple poles on $\sum_{i=1}^np_i$. The Weil--Petersson metric $g_{\mathrm{WP}}$ and Weil--Petersson form $\omega_{\mathrm{WP}}$ are defined at such a point as the real and imaginary part of the Petersson pairing on the vector space of quadratic differentials:
\begin{equation}  \label{WPmetric}
\langle\eta,\xi\rangle\coloneqq  \int_C\frac{\overline{\eta}\xi}{h},\qquad \eta,\xi\in H^0 (C,K_C^{\otimes 2}(p_1+\cdots+p_n))
\end{equation}
where $h$ is the complete hyperbolic metric on $C\setminus\{p_1,\dots,p_n\}$.  The pairing \eqref{WPmetric} defines a Hermitian metric~\cite{WeiMod} on $\modm_{g,n}$ with imaginary part a K\"ahler form $\omega_{\rm WP}$.

This can be generalized~\cite{STrWei} by replacing the complete hyperbolic metric $h$ with a conical hyperbolic metric $h(\mathbf{L})$ with prescribed cone angles at points $p_1,\dots,p_n$. For  $\mathbf{L}\in i[0,2\pi)^n$ satisfying \eqref{eqn:GB} the K\"{a}hler metric $g_\mathbf{L}$ and associated K\"{a}hler form $\omega_{\rm WP}(\mathbf{L})$ are defined as the real and imaginary part of the Hermitian metric
\begin{equation}  \label{WPcmetric}
\langle\eta,\xi\rangle\coloneqq  \int_C\frac{\overline{\eta}\xi}{h(\mathbf{L})},\qquad \eta,\xi\in H^0(C,K_C^{\otimes 2}(p_1+\cdots +p_n)).
\end{equation}

The total measure is defined to be the volume of the moduli space:
\begin{equation}  \label{eq:vol}
\mathrm{Vol}\big(\cM^{\text{hyp}}_{g,n}(\mathbf{L})\big)=\int_{\modm_{g,n}^{\text{hyp}}(\mathbf{L})}\exp\omega_{\rm WP}(\mathbf{L}).
\end{equation}

\subsection{Wolpert: The volume for $\mathbf{L}=\mathbf{0}$ is finite }
The metric completion of $\modm_{g,n}$ with respect to $g_{\mathrm{WP}}$ defines $\overline{\modm}_{g,n}^{\mathrm{hyp}}(\mathbf{0})$. Points in the compactification correspond to nodal hyperbolic surfaces with cusps at nodes.   The role of complete nodal hyperbolic surfaces in the compactification of the moduli space was first studied by Bers \cite{BerSpa} and developed further in \cite{HarCha,HKoAna}.   

Wolpert~\cite{WolWei} showed the homeomorphism on the interior extends to a homeomorphism $\overline{\modm}_{g,n}^{\mathrm{hyp}}(\mathbf{0})\longrightarrow \overline{\modm}_{g,n}$ of the compactification by nodal hyperbolic surfaces with the Deligne--Mumford compactification of moduli space, and further, the symplectic form $\omega_{\rm WP}$ extends smoothly to a closed symplectic form on $\overline{\modm}_{g,n}^{\mathrm{hyp}}(\mathbf{0})$ via Fenchel--Nielsen coordinates and to a closed current via the complex-analytic structure on the Deligne--Mumford compactification $\overline{\modm}_{g,n}$. Both extensions were shown to represent the same cohomology class $\left[\omega_{\mathrm{WP}}\right]=2\pi\kappa_1\in H^2(\overline{\modm}_{g,n},\br)$. Hence
\begin{eqnarray*}
 \mathrm{Vol}\big(\cM^{\text{hyp}}_{g,n}(\mathbf{0})\big)&=&\int_{\modm_{g,n}^{\text{hyp}}(\mathbf{0})}\exp\omega_{\rm WP}\\
&=&\int_{\overline{\modm}_{g,n}^{\text{hyp}}(\mathbf{0})}\exp\omega_{\rm WP}\\
&=&\int_{\overline{\modm}_{g,n}}\exp\omega_{\rm WP}\\
 &=&\frac{(2\pi^2)^{3g-3+n}}{(3g-3+n)!} \int_{\overline{\modm}_{g,n}}\kappa_1^{3g-3+n} 
 \end{eqnarray*}
is finite.

\subsection{Mirzakhani: An innovative proof of Witten's conjecture} Mirzakhani 
 further showed that the volume of $\cM^{\text{hyp}}_{g,n}(\mathbf{L})$ for $\mathbf{L}\in \br_{\geq0}^n$ is finite, given by a symmetric polynomial $V^{\mathrm{Mirz}}_{g,n}(L_1,...,L_n)\in\br[L_1^2,...,L_n^2]$,  and provided two ways to compute the value:
\begin{enumerate}
\item \label{recpol} via a recursion \cite{MirSim} constructed by removing pairs of pants from hyperbolic surfaces with at least one boundary component that reduces the computation of the volumes to the initial calculations on one-dimensional moduli spaces $\mathrm{Vol}(\modm_{0,3}^{\mathrm{hyp}}(L_1,L_2,L_3))=1$ and $\mathrm{Vol}(\modm_{1,1}^{\mathrm{hyp}}(L))=\frac{1}{48}(L^2+4\pi^2)$ which were computed by N\"a\"at\"anen and Nakanish~\cite{NNAWei}; and 
\item \label{compol} generalizing Wolpert's work via a symplectic reduction argument \cite{MirWei}, Mirzakhani showed the homeomorphism on the interior extends to a homeomorphism 
$\overline{\modm}_{g,n}^{\mathrm{hyp}}(\mathbf{L})\longrightarrow \overline{\modm}_{g,n}$
of the compactification by nodal hyperbolic surfaces with the Deligne--Mumford compactification of moduli space, and showed that $\omega_{\mathrm{WP}}(\mathbf{L})$ extends to a closed current on $\overline{\modm}_{g,n}$, which represents the cohomology class 
\begin{align}  \label{WPdef}
\left[\omega_{\mathrm{WP}}(\mathbf{L})\right]& =\left[\omega_{\mathrm{WP}}\right]+\sum_{j=1}^n\tfrac12L_j^2\psi_j \\
& =2\pi^2\kappa_1 +\sum_{j=1}^n\tfrac12L_j^2\psi_j\in H^2(\overline{\modm}_{g,n},\br)
\end{align}
so that 
\begin{align}  \label{Poldef}
V^{\text{Mirz}}_{g,n}({\bf L})& \coloneqq  \int_{\modm_{g,n}^{\mathrm{hyp}}(\mathbf{L})}\exp\omega_{\mathrm{WP}}(\mathbf{L}) \\
& =\int_{\overline{\modm}_{g,n}}\exp\left(2\pi^2\kappa_1+\sum_{j=1}^n\tfrac12L_j^2\psi_j\right)\in\br[L_1,...,L_n].
\end{align}
\end{enumerate}
Taken together, these two results provided an innovative alternate proof of Witten's conjecture on the values of top $\psi$-intersections.  

Due to the connections and similarities to the story of Masur--Veech volumes of moduli space, the rest of this section will focus on work in the area that has followed in the following three directions: 
\begin{itemize}
\item
extending the volume function to cone points, of particular interest to physicists due to it's connection to Jackiw--Teitelboim (JT) gravity; 
\item
different recursions relating volumes of different moduli spaces and improving the efficiencies of computation of the volumes and intersection numbers; and
\item
the large genus asymptotics of the Mirzakhani volume polynomial, which have applications to random surfaces and intersection number asymptotics.
\end{itemize}

\subsection{Moduli of hyperbolic cone surfaces 
}

The study of hyperbolic surfaces with cone points was initiated by Schumacher and Trapani~\cite{STrVar} while Do and Norbury \cite{DNoWei} used the limiting volumes of these moduli spaces to construct new recursions in the Mirzakhani polynomials $V^{\text{Mirz}}_{g,n}({\bf L})$ (see \S \ref{S:recursion}).

The Mirzakhani polynomials $V^{\text{Mirz}}_{g,n}({\bf L})$ are polynomials in $L_j^2$ that Mirzakhani showed return the volume of the moduli space $\mathrm{Vol}\big(\cM^{\text{hyp}}_{g,n}(\mathbf{L})\big)$ when $\mathbf{L}\in \br_{\geq0}^n$.
When $L_j\in i[0,2\pi)$ the boundary component of the surface is realized as a cone point of angle $|L_j|$. The volume function on the parameter space $i[0,2\pi)^n$ is defined to return the volume of the corresponding moduli space 
\begin{eqnarray*}
\begin{array}{cccc}
\mathrm{Vol}\colon &i[0,2\pi)^n&\longrightarrow&\br_+\\
&\mathbf{L}&\mapsto& \mathrm{Vol}\big(\cM^{\text{hyp}}_{g,n}(\mathbf{L})\big).
\end{array}
\end{eqnarray*}
It is convenient to also use the parameters of the cone angles, that is, $\boldsymbol{\theta}=-i\mathbf{L}$ such that $\theta_j=-iL_j$. 

Tan, Wong, and Zhang~\cite{TWZGen} have provided a McShane identity on hyperbolic surfaces with cone angles which immediately generalizes Mirzakhani's recursion for $\mathbf{L}\in (\br_{\geq0}\cup(0,\pi)i)^n$. This shows that the polynomial $V^{\text{Mirz}}_{g,n}({\bf L})$ corresponds to the volume function $\mathrm{Vol}\big(\cM^{\text{hyp}}_{g,n}(\mathbf{L})\big)$ for $\mathbf{L}\in i[0,\pi)^n$. However, both the recursion and symplectic reduction arguments used by Mirzakhani depended vitally on the existence of pants decompositions of the hyperbolic surfaces, which does not exist for all $\mathbf{L}\in i[0,2\pi)^n$.
Observing the existence of values of $\mathbf{L}\in i[0,2\pi)^n$ that satisfy the Gauss--Bonnet theorem and hence correspond to non-empty moduli spaces, for which $V^{\text{Mirz}}_{g,n}({\bf L})<0$ showed the Mirzakhani polynomial does not extent to give the volume function $\mathrm{Vol}\big(\cM^{\text{hyp}}_{g,n}(\mathbf{L})\big)$ in all cases. Explicitly, it was observed~\cite{ANoVol},
$$ V^{\text{Mirz}}_{0,4}({0,0,i\theta,(2\pi-\varepsilon)i})<0  $$
for $0<4\pi\varepsilon<\theta^2<4\pi^2$.

By showing that the Weil--Petersson symplectic form extended smoothly to a closed current on the Deligne--Mumford compactification with the same cohomology class, Anagnostou and Norbury \cite{ANoVol} further showed that Mirzakhani's polynomial does provide the volume when $\theta_i+\theta_j=|L_i|+|L_j|<2\pi$ for all $i,j$, which includes cases where Fenchel--Nielsen coordinates do not exist, that is, when one angle $\theta_j\in(\pi,2\pi)$. 

Then Anagnostou, Norbury, and the second--named author \cite{AMN} showed the volume function is a piecewise polynomial in $L_j$ (or $\theta_j$) and explicitly described how the polynomial changes as one passes between the chambers on which the function is given by a fixed polynomial. These results are revisited in \S\ref{S:WC} after introducing the necessary machinery.

\subsubsection{Compactifications via metric completions: nodal hyperbolic surfaces}

For $\mathbf{L}\in i[0,2\pi)^n$ we define $\overline{\modm}_{g,n}^{\mathrm{hyp}}(\mathbf{L})$ the moduli space of nodal hyperbolic surfaces, as the metric completion of ${\modm}_{g,n}$ with respect to $g_{\mathbf{L}}$ defined in \S\ref{WPcmetric}. 

Setting $\mathbf{L}\geq \mathbf{L}'$, that is, $|L_j|\geq |L_j'|$ for all $j$, then for any pointed curve $[C,p_1,\dots,p_n]\in{\modm}_{g,n}$ the hyperbolic metric $h(\mathbf{L}')$ dominates $h(\mathbf{L})$ \cite{STrVar}, a consequence of which  \cite[Prop 2.2]{ANoVol} is a continuous map between the metric completions
$$ \overline{\modm}_{g,n}^{\mathrm{hyp}}(\mathbf{L}') \longrightarrow \overline{\modm}_{g,n}^{\mathrm{hyp}}(\mathbf{L}). $$
Setting $\mathbf{L}'=\mathbf{0}$ results in a continuous map
$$ \overline{\modm}_{g,n}\longrightarrow  \overline{\modm}_{g,n}^{\mathrm{hyp}}(\mathbf{L}) $$
showing the metric completions to be compact (shown previously in \cite{STrWei}) via a result of Masur~\cite{MasExt}, that the completion $ \overline{\modm}_{g,n}^{\mathrm{hyp}}(\mathbf{0})$ is homeomorphic to the Deligne--Mumford compactification $\overline{\modm}_{g,n}$. However, this map is not a homeomorphism in general. 
An immediate consequence of which, is that the strategy of Wolpert and Mirzakhani to compute the volume via intersection theory on an algebraic compactification that is homeomorphic to the metric completion will need a new candidate other than the Deligne--Mumford compactification  $\overline{\modm}_{g,n}$.

In \cite[\S 2.2]{ANoVol} the authors provided a study of the local behavior of the universal curves over the metric completion $\overline{\modm}_{g,n}^{\mathrm{hyp}}(\mathbf{L})$. Local hyperbolic geometry shows for any subset $S\subset \{1,\dots,n\}$ such that 
$$ \sum_{j\in S}\theta_j\geq 2\pi(|S|-1)  $$
the points of $S$ can come arbitrarily close in the universal curve over $\modm_{g,n}$, and hence collide in the limit. Any algebraic compactification of $\modm_{g,n}$ that is homeomorphic to $\overline{\modm}_{g,n}^{\mathrm{hyp}}(\mathbf{L})$ would necessarily also admit this behavior. As the Deligne--Mumford compactification $\overline{\modm}_{g,n}$ of ${\modm}_{g,n}$ does not admit this behavior, we are forced to look at alternate algebraic compactifications of ${\modm}_{g,n}$ for candidates that are homeomorphic to $\overline{\modm}_{g,n}^{\mathrm{hyp}}(\mathbf{L})$ to open the avenue of intersection theory on these algebraic compactifications to compute $\mathrm{Vol}({\modm}_{g,n}^{\mathrm{hyp}}(\mathbf{L}))$.

\subsubsection{Algebraic compactifications: Hassett stability}
Hassett \cite{HasMod} produced algebraic compactifications $\overline{\modm}_{g,\mathbf{a}}$ of $\cM_{g,n}$ using weighted stable curves depending on weights $\mathbf{a}=(a_1,...,a_n)\in(0,1]^n$ where the weights allow points to come together.

Hassett \cite{HasMod} defined the space of parameters 
\[ \cd_{g,n} \coloneqq  \{\mathbf{a}\in(0,1]^n\mid\sum a_i>2-2g\}\]
which define stability conditions governing a collection of permitted nodal curves used to produce alternative compactifications of $\cM_{g,n}$. For $\mathbf{a}\in\cd_{g,n}$, a pointed nodal curve $(C,p_1,\ldots,p_n)$  is $\mathbf{a}$-stable when the points $p_1,\ldots,p_n$ lie in the smooth locus of $C$, the twisted dualizing sheaf $\omega_C(a_1p_1+\cdots +a_np_n)$ is ample on every irreducible component of $C$, and $\{p_j=p\mid j\in J\}$ then $\displaystyle\sum_{j\in J}a_j\leq 1$. When $\mathbf{a}=(1^n)$ this is equivalent to the Deligne--Mumford stability condition. 

Hassett \cite{HasMod} showed $\overline{\modm}_{g,\mathbf{a}}$ of all  $\mathbf{a}$-stable curves defines a compactification  of $\cM_{g,n}$. Further, these compactifications are exactly what we require, as it is shown in \cite[Thm 1.3]{ANoVol}, that given $\mathbf{L}\in i[0,2\pi)^n$, defining
 \[ a(L_j)\coloneqq  1+\frac{iL_j}{2\pi}=1-\frac{\theta_j}{2\pi},
\]
and letting $\mathbf{a}=a(\mathbf{L})=(a(L_1),\dots,a(L_n))$ there exists a homeomorphism
\begin{equation}   \label{HassIso}
\overline{\modm}_{g,n}^{\mathrm{hyp}}(\mathbf{L})\cong\overline{\modm}_{g,\mathbf{a}}.
\end{equation}  
The possible homeomorphism types of these moduli spaces further decomposes according to the parameters. For any $J\subset\{1,...,n\}$ such that $|J|\geq 2$ define the \emph{wall}
\[W_J=\Big\{\mathbf{a}\in\cd_{g,n}\ \Big\vert\ \sum_{j\in J}a_j=1\Big\}.\]
The components of $\cd_{g,n}\setminus\cup_JW_J$ are {\em chambers} of $\cd_{g,n}$.  Given any $\mathbf{a}\in\cd_{g,n}\setminus\cup_JW_J$, 
for any $\mathbf{b}\in\cd_{g,n}$ that lies in the same chamber, a nodal curve $C$ is $\mathbf{a}$-stable if and only if it is $\mathbf{b}$-stable and hence $\overline{\modm}_{g,\mathbf{a}}\cong\overline{\modm}_{g,\mathbf{b}}$. 
We denote the unique Hassett compactification specified by a chamber $\cc$ by $\overline{\modm}_{g,\cc}$.

A chamber $\cc\subset\cd_{g,n}$ is uniquely determined by an order-preserving function on the power set $\cp(\mathbf{n})$,
\begin{equation}\label{Chamberdefn}
\cc\colon\cp(\mathbf{n})\to\{0,1\}\text{ defined on }J\subset\mathbf{n}\text{ by:}\quad \cc(J)=0\Leftrightarrow\forall\mathbf{a}\in\cc,\ \sum_{j\in J}a_j\leq1
\end{equation}
It is order-preserving with respect to the natural partial order on $\cp(\mathbf{n})$ and the total order $0<1$ on $\{0,1\}$. The sets $S\subset \{1,\dots,n\}$ such that $\cc(S)=0$ are the sets of points that can collide in the compactification.  
The \emph{main chamber} $\cc^\mathrm{M}\subset\cd_{g,n}$ contains the point $(1^n)$ and corresponds to the Deligne--Mumford compactification. 

A chamber $\cc\subset\cd_{g,n}$ incident to and above a wall $W_S$, so in particular $\cc(S)=1$, determines a unique chamber $\cc'$ that satisfies $\cc'(S)=0$ and $\cc'(J)=\cc(J)$ for all $J\neq S$.  Then $\cc'$ is incident to and below $W_S$, and $\cc$ is related to $\cc'$ by a simple wall-crossing.

For $\bold{a}=(a_1,\dots,a_n)\in \cd_{g,n}$, the classes  $\psi_j\in H^2(\Mbar{g}{\bold{a}},\bq)$ are defined  by 
$$\psi_j\coloneqq  c_1(s_j^*(\omega_\pi))$$ 
where $\omega_\pi$ is the relative dualizing sheaf of the universal family
$$\pi\colon \mathcal{C}_{g,\bold{a}}\longrightarrow \Mbar{g}{\bold{a}}$$
and $s_j$ is the section corresponding to the $j$th point.

We define $\kappa_1\coloneqq  \pi_*(c_1(\omega_\pi(\sum_{i=1}^n D_i))^2)$. Note that this definition follows that of~\cite{ACoCom} and differs from the Miller--Morita--Mumford class $\pi_*(c_1(\omega_\pi)^2)$ though both classes appear in the literature with the same notation.

In~\cite{AMN} the authors used the push-forward to define a class $\gamma(\bold{a})$:
$$\frac{1}{2}\pi_*(2\pi\omega_\pi(\mathbf{a}\cdot D))^2)=2\pi^2\gamma(\bold{a}),$$
which equivalently, defines the class $\gamma(\bold{a})\in H^2(\Mbar{g}{\cc},\bq)[\bold{a}]$ as
\begin{equation}  \label{gama}
\gamma(\bold{a})\coloneqq  \kappa_1-\sum (1-a_j)^2\psi_j+\sum_{\cc(i,j)=0}(2a_ia_j-2)D_{i,j}.
\end{equation}

The image of the formula \eqref{gama} under the pullback map into $H^*(\overline{\modm}_{g,n})$ produces the formula \cite[Equation (1.2)]{ETu2Dd} (up to extra $\psi$  classes representing geodesic boundary components that can also be included here).

Given a chamber $\cc\subset\cD_{g,n}$, define the volume polynomial $V_{g,\cc}$ by 
\begin{equation}  \label{volalg}
V_{g,\cc}(i\boldsymbol{\theta})\coloneqq  \int_{\Mbar{g}{\cc}}\exp(2\pi^2\gamma(\bold{a}))\in\br[\theta_1,...,\theta_n]
\end{equation}
where $\mathbf{a}_j=a(\theta_j)$. Further, the volume function can now be seen to be piecewise polynomial, where the function is a fixed polynomial over each Hassett chamber. That is,
\[V_{g,\cc}(i\boldsymbol{\theta})=\mathrm{Vol}\big(\cM^{\text{hyp}}_{g,n}(i\boldsymbol{\theta})\big),\quad \text{when }\mathbf{a}(\boldsymbol{\theta})\in\cc.\] 
The formula \eqref{volalg} agrees with Mirzakhani's formula \cite{MirWei} when $\cc=\cc^{\rm M}$ and generalizes it to all chambers.

\subsection{Wall-crossing volume polynomials}\label{S:WC}

Given a chamber $\cc\subset\cd_{g,n}$ incident to and above a wall $W_S$, hence uniquely determining $\cc'$ incident to and below $W_S$, we define the wall-crossing polynomial $\wc_{\cc,S}\in\br[\theta_1,...,\theta_n]$ by
\begin{equation}\label{WCis}
V_{g,\cc'}(i\boldsymbol{\theta}) = V_{g,\cc}(i\boldsymbol{\theta})+\wc_{\cc,S}(\boldsymbol{\theta}).
\end{equation}
 
For $S\subset\{1,\dots,n\}$, we let $\boldsymbol{\theta}_S\in[0,2\pi)^S$ denote the restriction of $\boldsymbol{\theta}\in[0,2\pi)^n$ to the entries indexed in $S$, and $S^c$ is the complement of $S\subset\{1,\dots,n\}$.   

Let $\cc\subset\cd_{g,n}$ be incident to and above a wall $W_S$, via intersection theoretic arguments the authors in \cite{AMN} proved 
\begin{equation} \label{WCF}
\wc_{\cc,S}(\boldsymbol{\theta})=\int_{0}^{\phi_S} V_{g,\cc/S}(i\boldsymbol{\theta}_{S^c},i\theta)\cdot V_{0,\cc_1}(i\theta,i\boldsymbol{\theta}_{S})  \cdot \theta \cdot d\theta
\end{equation}
where $\phi_S=\sum_{j\in S}\theta_j-2\pi(|S|-1)$ and the chambers $\cc/S$ and $\cc_1$ are identified as follows:
\begin{itemize}
\item
$\cc_1$ is the \emph{minimal chamber corresponding to the first entry} that can be identified as the unique chamber of of $\cD_{0,|S|+1}$ containing the element $(1,b^{|S|})$ for $\frac{1}{n-1}<b\leq\frac{1}{n-2}$.
\item 
$\cc/S$ is the \emph{quotient chamber} in $\cd_{g,n-|S|+1}$ defined by a natural quotient construction in \cite{AMN}, that can be identified as the unique chamber containing the element $(i\theta^{S^c},1)$.
\end{itemize}

\begin{problem}
\label{prob:wc}
Despite the seemingly geometrically suggestive form of the wall-crossing formula, including two volume polynomials of related moduli spaces, the proof is via intersection theory and hence explicitly algebro-geometric.

An open problem of interest is to give a geometric interpretation of the wall-crossing formula from the perspective of hyperbolic or symplectic geometry. 

Further, can the Weil--Petersson metric, volume function, wall-crossing phenomena and chamberwise polynomiality be extended to any expansion of the domain of $\mathbf{L}$ in the definition of the moduli space $\modm_{g,n}^{hyp}(\mathbf{L})$ (see Problem~\ref{prob:mixed}).   
\end{problem}

\subsection{Recursion and generalized string and dilaton equations}\label{S:recursion}

Mirzakhani's recursion sparked much interest in understanding relations between volumes of moduli spaces. There are three main ways to approach relations between volumes:
\begin{enumerate}
\item
Intersection theory: the cohomology classes of interest transform nicely under the morphism that forgets a marked point of $\overline{\modm}_{g,\mathbf{a}}$, which is further a flat morphism under certain conditions. This allows for the relation of the intersection calculations and hence volumes of different moduli spaces.
\item
Removing a pair of pants: the number of boundary components can be changed by removing a pair of pants (as in Mirzakhani's original work), which can be used to relate volumes of different moduli spaces.
\item
$2\pi$ limits: as the Weil--Petersson form degenerates, the volume function will approach zero as any cone point approaches $2\pi$.
\end{enumerate}

The polynomials $V^{\text{Mirz}}_{g,n}(\mathbf{L})$ were proven in \cite{DNoWei}, via relations between intersection numbers on $\overline{\modm}_{g,n}$, to satisfy the following relations:
\begin{subequations} \label{limitdil}
\begin{gather}
V^{\text{Mirz}}_{g,n+1}({\bf L},2\pi i)=\sum_{k=1}^n\int_0^{L_k}L_kV^{\text{Mirz}}_{g,n}({\bf L})dL_k, \label{limitdil1}\\
\frac{\partial V^{\text{Mirz}}_{g,n+1}}{\partial L_{n+1}}({\bf L},2\pi i)=2\pi i(2g-2+n)V^{\text{Mirz}}_{g,n}({\bf L}).\label{limitdil2}
\end{gather}
\end{subequations}
These relations generalize with via the wall--crossing perspective and give a geometric explanation of the suggestive $2\pi i$ limit. 
Let $\theta_j\to 2\pi$, equivalently $a_j\to0$, to define a path in $\cd_{g,n}$ along which walls are crossed only from above to below.  A chamber is:
\begin{itemize}
\item
\emph{flat in the $i$th coordinate} if there is a path $\theta_i\to 2\pi$ crossing only walls $W_S$ with $|S|=2$, and 
\item 
\emph{light in the $i$th coordinate} if there is a path $\theta_i\to 2\pi$ along which no walls are crossed.
\end{itemize}
If a chamber $\cc\subset\cd_{g,n+1}$ is light in the $(n+1)$th coordinate, then the chamber is incident to the $2\pi i$ boundary. As a consequence of a degeneration of the Weil--Petersson form in the $2\pi$ limit, the volume function and hence the polynomial corresponding to this chamber must vanish in this limit. That is, 
\begin{equation}\label{eqn:incidentzero}
V_{g,\cc}(i\theta_1,\dots,i\theta_{n},2\pi i)=0.
\end{equation}
This is also proven via an algebro-geometric argument via the transformation of the cohomology classes under  the forgetful morphism. 

Considering $\{\cb_j,S_j\}_{j=1}^k$ any series of simple wall-crossings in $\cd_{g,n+1}$ from $\cb=\cb_1$ to a chamber $\cb_{k+1}$ that is light in the $(n+1)$th coordinate, the vanishing of the volume function in the $2\pi$ limit gives, via the wall-crossing perspective, the following equality of polynomials:
\begin{equation}\label{eqn:incidentzerowc}
V_{g,\cc}(i\theta_1,\dots,i\theta_n,2\pi i)=-\sum_{j=1}^k\wc_{\cb_j,S_j}(\theta_1,\dots,\theta_n,2\pi )
\end{equation}
In particular, when applied to the main chamber $\cb=\cc^{\rm M}$ this produces the relation \eqref{limitdil1} via  \eqref{WCis}.

The equation \eqref{limitdil2} also generalizes to other chambers. A unique chamber $\cc|_{\mathbf{n}}\subset \cD_{g,n}$ is identified by a chamber $\cc\subset\cD_{g,n+1}$ that is flat in the $(n+1)$th coordinate, by forgetting the $(n+1)$th point. For any chamber $\cc\subset\cd_{g,n+1}$ flat in the $(n+1)$th coordinate
\begin{equation}\label{eqn:derivative}
\frac{\partial V_{g,\cc}}{\partial\theta_{n+1}}(i\theta_1,\dots,i\theta_n,2\pi i)=-2\pi \big(2g-2+|q(\cc)|+\sum_{j\notin q(\cc)} a(\theta_j)\big)V_{g,\cc|_{\mathbf{n}}}(i\theta_1,\dots,i\theta_n)
\end{equation}
where $q(\cc)=\{j\mid W_{\{j,n+1\}}\text{ is crossed as }\theta_{n+1}\to2\pi\}$.

For the main chamber $\cc^{\rm M}$ \eqref{eqn:derivative} reduces to \eqref{limitdil2}.  When $\cc$ is light in the $(n+1)$th coordinate, \eqref{eqn:derivative} gives the derivative of the piecewise polynomials $\mathrm{Vol}\big(\cM^{\text{hyp}}_{g,n}(i\boldsymbol{\theta})\big)$ proven previously in \cite[(1.3b)]{ETu2Dd}.

The result can be generalized via wall--crossing polynomial to chambers not flat in the $(n+1)$th coordinate \cite[Corollary 4.3]{AMN} and further, proofs of all results in \cite{AMN} allow the existence of boundary geodesics, provided the lengths of the added boundary components are sufficiently small (see \cite[Lemma 2.3]{ANoVol}).  

\subsection{Higher cone angles}\label{subsec:higherangles}
For general $\mathbf{L}\in i\br_{\geq0}^n$, allowing cone angles of $2\pi$ and greater, it is proven in \cite{McOPoi} that the existence of a hyperbolic metric with given cone angles in a given conformal class still holds and the space is endowed with a canonical symplectic form.  However, \eqref{WPcmetric} diverges whenever some $\theta_j=|L_j|\geq2\pi$ and hence it no longer defines a metric on $\modm_{g,n}$ and we lose the avenue of the metric completion to compactify ${\modm}_{g,n}^{\mathrm{hyp}}(\mathbf{L})$.

Nevertheless, Sauvaget~\cite{Sauv} has recently provided an innovative way forward, proposing a definition of the volume of these moduli spaces as a limit of the Masur--Veech volumes of moduli spaces of $k$-differentials and proved that the values correspond for $\mathbf{L}\in i[0,2\pi)^n$ in the case that $-iL_j=\theta_j<\pi$ for at least $n-1$ angles. We will provide more details of this in \S\ref{subsec:k-diff} and 
\S\ref{sec:common} as this direct connection between the two volumes of this tale most naturally sits in the discussion of the common themes of the two perspectives.

The work of Sauvaget and related developments provide motivation for a number of related questions of independent interest. Problem~\ref{prob:coord} to provide coordinates for ${\modm}_{g,n}^{\mathrm{hyp}}(\mathbf{L})$ in these cases is further motivated by these developments.
In addition, as discussed above, \eqref{WPcmetric} diverges whenever some $\theta_j=|L_j|\geq2\pi$ and it no longer defines a metric on $\modm_{g,n}$. Hence we cannot compactify ${\modm}_{g,n}^{\mathrm{hyp}}(\mathbf{L})$ via the metric completion.

\begin{problem}
    \label{prob:highangles}
    Progress the understanding of moduli of hyperbolic surfaces with higher cone angles. Open interesting problems include:
    \begin{enumerate}
    \item
    Verify that Sauvaget's definition of the volume coincides with the Weil--Petersson volume of these moduli spaces.
    \item
    Provide a suitable compactification of ${\modm}_{g,n}^{\mathrm{hyp}}(\mathbf{L})$ in the case that $\mathbf{L}\in i\br_{\geq0}^n$, allowing cone angles of $|L_j|>2\pi$. Further, provide a homeomorphic algebraic compactification of $\modm_{g,n}$ or prove that no such space can exist.
     \end{enumerate}
\end{problem}

\subsection{Large genus asymptotics}

\subsubsection{Early contributions} The contributions of various authors proved the existence of constants $0<c<C$ independent of $g$, such that for fixed $n$, asymptotically in $g$
$$c^g(2g)!< V^{\text{Mirz}}_{g,n}(\underline{0})<C^g(2g)!.$$
 Penner~\cite{PenDiff} gave an estimate of a lower asymptotic bound of $V^{\text{Mirz}}_{g,1}(0)$ via the introduction the decorated Teichm\"{u}ller space and a method of  integrating of top degree differential forms on $\modm_{g,n}$. Grushevsky~\cite{Grushevsky}  generalized this to obtain an upper asymptotic bound on $V^{\text{Mirz}}_{g,n}(\underline{0})$ for $n\geq1$. Schumacher and Trapani~\cite{STrInt} generalized Penner's lower bound via intersection theory to obtain a lower bound for $V^{\text{Mirz}}_{g,n}(\underline{0})$ for $n\geq0$.
 
 \subsubsection{The Mirzakhani--Zograf Expansion} Based on numerical data, Zograf~\cite{Zog} conjectured a precise asymptotic form for $V^{\text{Mirz}}_{g,n}$. 
 Following this, Mirzakhani~\cite{MirGrowth} proved 
 $$\frac{V^{\text{Mirz}}_{g,n}(\underline{0})}{V^{\text{Mirz}}_{g-1,n+2}(\underline{0})}\to 1$$ and $$\frac{V^{\text{Mirz}}_{g,n+1}(\underline{0})}{2gV^{\text{Mirz}}_{g,n}(\underline{0})}\to 4\pi^2$$ as $g\to \infty$. The two authors then combined ~\cite{MirZog}, and in a major contribution toward this conjecture showed that for fixed $n$, $V^{\text{Mirz}}_{g,n}(\underline{0})$ admits a full asymptotic expansion in inverse powers of $g$
 $$V^{\text{Mirz}}_{g,n}(\underline{0})=C_{\text{univ}}\frac{(2g-3+n)!(4\pi^2)^{2g-3+n}}{\sqrt{g}}\left(1+\frac{c_n^{(1)}}{g}+\cdots+\frac{c_n^{(k)}}{g^k}+O\left(\frac{1}{g^{k+1}}\right)\right)$$
 with $C_{\text{univ}}$ the universal constant conjectured to be $1/\sqrt{\pi}$ and $c_n^{(k)}$ degree $2k$ polynomials in $n$ with effectively computable coefficients. 
 
More generally, $V^{\text{Mirz}}_{g,n}(L_1,\dots,L_n)$ is polynomial in $L_i^2$, and understanding the asymptotics of these polynomials is critical for studying random surfaces.

\subsubsection{First-order approximation} Mirzakhani and Petri~\cite{MirPetri} proved 
$$\lim_{g\to\infty}\frac{V^{\text{Mirz}}_{g,n}(L_1,\dots,L_n)}{V^{\text{Mirz}}_{g,n}(\underline{0})}=\prod_{j=1}^n\frac{2\sinh(L_j/2) }{L_j}   $$
  and used this estimate to prove that the counting function of lengths of closed geodesics converges to a Poisson process. Recently, Huang~\cite{Huang} proved a conjecture of Kimura~\cite{Kimura} that if the lengths $L_i$ are allowed to grow with $g$, provided $L_i=o(\sqrt{g})$, the same formula holds up to a constant.
  
\subsubsection{Higher-order expansions} More recently, Anantharaman and Monk~\cite{AnMonk} extended the work of Mirzakhani and Petri to prove the existence of a full asymptotic expansion of ${V^{\text{Mirz}}_{g,n}(\bf{L})}/{V^{\text{Mirz}}_{g,n}(\underline{0})}$ and provided an explicit second-order term.
 
\subsubsection{Large $n$ asymptotics} While the focus has been on the asymptotics for large $g$, Manin and Zograf~\cite{ManinZograf} considered the case of $V^{\text{Mirz}}_{g,n}(\underline{0})$, while Hide and Thomas~\cite{HideThomas} extended this to $L_i\geq0$ and derived asymptotics for fixed $g$ in large $n$ and established formulas for intersections of $\psi$-classes in the large $n$ limit with behavior distinct from the large $g$ case. 

\subsubsection{The optimal spectral gap conjecture}
The spectrum of the Laplace--Beltrami operator consists of discrete eigenvalues 
$$0=\lambda_0<\lambda_1\leq \lambda_2\leq \cdots ,$$
where $\lambda_1$, the first non-zero eigenvalue controls important dynamical properties of the surface and is known as \emph{the spectral gap}.

The \emph{Optimal Spectral Gap Conjecture} states that for any $\epsilon>0$, the Weil--Petersson probability that $\lambda_1\geq 1/4-\epsilon$ tends to $1$ as $g\to \infty$. 

Roughly speaking, the Selberg trace formula relates the eigenvalues of the Laplace--Beltrami operator to the lengths of closed geodesics on the hyperbolic surface and the question of the asymtotics of the Weil--Petersson expectation of $\lambda_1$ is reduced to understanding the asymptotics of the volumes $V^{\text{Mirz}}_{g,n}(\bf{L})$.

Independently, Wu--Xue~\cite{WuXue316} and Lipnowski--Wright~\cite{LipWright} recently proved the bound of $3/16-\epsilon$ which was then improved to $2/9-\epsilon$ by Anantharaman and Monk~\cite{AnMonk}. The expansion of the volume polynomial allows for the computation of expectations of geometric functions which require high precision (of the order $1/g^N$ for large $N$) for spectral bounds.

\begin{problem}
    \label{prob:Spectral}
    Progress understanding the higher order asymptotics of the Mirzakhani polynomial to improve known bounds on the spectral gap. 
\end{problem}

\section{Masur--Veech volumes}
\label{sec:MV}

In this section, we provide a detailed overview of the definition of Masur--Veech volumes, the methods used to compute them, and their various generalizations. 

\subsection{Strata of holomorphic differentials}
\label{subsec:strata}

Masur--Veech volumes are defined on moduli spaces of holomorphic differentials on Riemann surfaces (also called Abelian differentials). Hence, we begin by reviewing their geometry.

Let $\mu = (m_1, \ldots, m_n)$ be an integral partition of $2g - 2$, where $m_i \ge 0$ for all $i$. Given a nontrivial holomorphic differential $\omega$ on a smooth Riemann surface $X$ of genus $g$, suppose that $\omega$ has $n$ distinct zeros at $z_1, \ldots, z_n$, where each $z_i$ is a zero of order $m_i$. Then we say that $\omega$ is of type $\mu$.

\subsubsection{Translation surfaces with cone points} 

There is a fascinating interplay between holomorphic differentials and translation structures with cone points. Away from the zeros of $\omega$, we can locally choose an analytic coordinate $u$ such that $\omega = du$. Viewing $u = x + {\rm i}y$ as the standard coordinate on the Euclidean plane, $\omega$ induces a flat metric on $X \setminus \{z_1, \ldots, z_n\}$. If $v$ is another local coordinate satisfying $\omega = dv$, then by the fundamental theorem of calculus, $u$ and $v$ differ by a translation, which preserves the induced flat metric. 

At a zero $z_i$ of order $m_i$, one can find a local coordinate $w$ such that $\omega = w^{m_i}dw$. Comparing this with the nearby expression $\omega = du$, we obtain $w = u^{m_i+1}$ up to a scalar factor. Consequently, as one travels once around $z_i$, the total angle under the flat metric induced by $du$ is $2\pi(m_i + 1)$. In this sense, $\omega$ induces a translation structure on $X$ with cone points at $z_1, \ldots, z_n$, where the cone angle at each $z_i$ equals $2\pi(m_i + 1)$.

\subsubsection{Period coordinates} 

Let $\mathcal H(\mu)$ be the moduli space parameterizing differentials $(X,\omega)$ of type $\mu$. Locally at $(X, \omega)$, the space $\mathcal H(\mu)$ admits a coordinate system given by the relative cohomology $H^1(X, \{z_1,\ldots, z_n\};\mathbb Z)$. More precisely, suppose 
$$\gamma_1,\ldots, \gamma_{2g}; \gamma_{2g+1},\ldots, \gamma_{2g+n-1}$$ 
form a basis for the relative homology $H_1(X, \{z_1,\ldots, z_n\};\mathbb Z)$, where the first $2g$ paths are closed and span the absolute homology $H_1(X; \mathbb Z)$, and each of the remaining paths $\gamma_{2g+i}$ joins $z_i$ to $z_n$ for $i = 1,\ldots, n-1$. Then 
$$\int_{\gamma_1}\omega,\ldots, \int_{\gamma_{2g}}\omega; \int_{\gamma_{2g+1}}\omega,\ldots, \int_{\gamma_{2g+n-1}}\omega$$ 
provide local coordinates for $\mathcal H(\mu)$, called {\em period coordinates}. From the viewpoint of the translation surface structure induced by $\omega$, deforming these period coordinates amounts to varying the parallel edges that bound the translation surface while preserving the number and cone angles of the singularities, thus determining a neighborhood of $(X,\omega)$ in $\mathcal H(\mu)$. As a consequence, $\mathcal H(\mu)$ is a complex orbifold of dimension 
$$\dim_{\mathbb C}\mathcal H(m_1,\ldots, m_n) = 2g+n-1,$$ 
where the orbifold structure arises from special $(X,\omega)$ with nontrivial automorphisms. 

Note that for fixed $g$, the union of $\mathcal H(\mu)$ over all partitions $\mu$ of $2g-2$ is the Hodge bundle $\mathcal H_g$ (with the zero section removed) of holomorphic differentials over the moduli space $\mathcal M_g$ of genus-$g$ Riemann surfaces. Shrinking a short period joining two zeros of orders $m_i$ and $m_j$ causes them to merge, which induces a natural stratification of $\mathcal H_g$ by the strata $\mathcal H(\mu)$, where $\mathcal H(\mu)$ contains $\mathcal H(\mu')$ in its boundary if and only if $\mu$ can specialize to $\mu'$ by merging its parts. In this sense, $\mathcal H(\mu)$ is called the {\em stratum} of holomorphic differentials of type $\mu$. 

\subsubsection{Connected components of strata of holomorphic differentials} 

It is worth noting that the strata of holomorphic differentials $\mathcal H(\mu)$ can be disconnected for certain $\mu$. In a seminal work, Kontsevich and Zorich classified their connected components completely. 

\begin{theorem}[A condensed version of {\cite[Theorems 1 and 2]{KZ03}}]
The strata $\mathcal H(\mu)$ of holomorphic differentials of type $\mu$ have up to three connected components, where additional components arise due to hyperelliptic and spin structures.
\end{theorem}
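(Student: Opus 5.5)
The statement combines two claims about $\mathcal H(\mu)$: it has at most three connected components, and every component beyond the first comes from one of two invariants, hyperelliptic structure or spin parity. I would prove it in the standard Kontsevich--Zorich way. First construct these invariants and show they separate components. Then show that, after fixing them, the stratum is connected. The second part is where the work is.

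\emph{Invariants.} The hyperelliptic components exist only for $\mu=(2g-2)$ and $\mu=(g-1,g-1)$. For $(2g-2)$, take pairs $(X,\omega)$ with $X$ hyperelliptic, the unique zero at a Weierstrass point, and the involution acting by $\omega\mapsto-\omega$. For $(g-1,g-1)$, take $X$ hyperelliptic with the two zeros swapped by the involution. Each locus is the image of a stratum of quadratic differentials on $\mathbb P^1$, so it is irreducible. Counting dimensions against $2g+n-1$ shows the locus has full dimension in $\mathcal H(\mu)$. A limit of hyperelliptic curves in the stratum is again hyperelliptic, and the zeros keep their special position under the involution, so the locus is closed. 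It is also open because it has full dimension, hence it is a whole connected component.

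The spin invariant applies when all $m_i$ are even. Then $\operatorname{div}(\omega)/2$ defines a theta characteristic $L$. Its parity $h^0(L)\bmod 2$ is deformation invariant by Atiyah--Mumford, so it is constant on components. For computing it I would use the flat-geometric description: by Johnson's formula the parity equals the Arf invariant of the quadratic form on $H_1(X;\mathbb Z/2)$ given by the index of the Gauss map along simple closed curves, plus one. I would then exhibit explicit flat surfaces of each parity to show both values occur, taking the non-hyperelliptic ones for $g\ge4$. In genus $3$, for $(4)$ and $(2,2)$, the hyperelliptic component has even parity and the non-hyperelliptic components are odd, which is why the count stays at most three there.

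\emph{Connectedness.} The hard step is proving that each set of fixed invariants is connected. I would first reduce to the minimal strata $\mathcal H(2g-2)$ by breaking zeros. Every component of $\mathcal H(\mu)$ contains, near its boundary, surfaces that arise from $\mathcal H(2g-2)$ by splitting the zero locally. This local surgery is connected and preserves parity, so components of $\mathcal H(\mu)$ are dominated by the adjacency data of components of $\mathcal H(2g-2)$.

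For $\mathcal H(2g-2)$ I would argue by induction on $g$. Bubbling a handle, i.e.\ gluing a cylinder in at the zero, takes $\mathcal H(2g-4)$ to $\mathcal H(2g-2)$. One then shows that every non-hyperelliptic component degenerates, through a Rauzy-class or Jenkins--Strebel one-cylinder argument, to a surface obtained by bubbling a handle. The parity changes predictably under bubbling, according to the angle at which the handle is attached. Hence there are at most two non-hyperelliptic components, one per parity. The base cases in genus $2$ and $3$ would be checked directly.

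The step I expect to be hardest is the degeneration claim: every component contains a surface built by handle bubbling. I would first try the combinatorics of Rauzy classes of permutations, as in Kontsevich--Zorich, rather than the algebraic route.
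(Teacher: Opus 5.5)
Your outline follows the Kontsevich--Zorich proof that the survey cites; the survey gives no proof of its own. However, the decisive step of your argument is a non sequitur, and it is not the step you flag as hardest. From ``every non-hyperelliptic component of $\mathcal H(2g-2)$ contains a surface $\mathcal C'\oplus s$'' and ``the parity of $\mathcal C'\oplus s$ is determined by that of $\mathcal C'$ and by $s$'' you conclude that there are at most two non-hyperelliptic components, one per parity. Parity is an invariant, not a complete invariant. These two facts only bound the number of components by the number of pairs $(\mathcal C',s)$, i.e.\ by $3(2g-3)$ under your induction hypothesis. They say nothing about whether two bubbled surfaces of equal parity lie in the same component, which is exactly what must be proved. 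Parity data alone cannot settle this. For $g\ge 4$ and $\mathcal C'=\mathcal H^{\mathrm{hyp}}(2g-4)$, the middle angle gives $\mathcal H^{\mathrm{hyp}}(2g-2)$. Another angle of the same parity gives non-hyperelliptic surfaces of the same spin parity, and these must still be matched with handles bubbled onto the spin components of $\mathcal H(2g-4)$. What is missing is a set of geometric relations among bubbling operations. The main one is that successive bubblings commute, $\mathcal C\oplus s_1\oplus s_2=\mathcal C\oplus s_2\oplus s_1$, which is proved by an explicit deformation. You also need the symmetry of the angle parameter and the genus-$3$ fact that $\mathcal H(4)$ has exactly two components. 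With these, every word $\mathcal H(0)\oplus s_1\oplus\cdots\oplus s_{g-1}$ can be rewritten into one of at most three normal forms. Only then do hyperellipticity and parity finish the count.

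The same gap recurs in your reduction to the minimal stratum. Breaking up the zero maps the components of $\mathcal H(2g-2)$ onto those of $\mathcal H(\mu)$, which bounds their number by three. But the statement also says extra components arise only from hyperelliptic or spin structures. So you must show the images coincide whenever no such invariant separates them. For example, you need that $\mathcal H(2g-3,1)$ is connected. You also need that for even $\mu\neq(2g-2),(2l,2l)$ the image of $\mathcal H^{\mathrm{hyp}}(2g-2)$ is the spin component of the same parity. ``The surgery preserves parity'' only shows that images stay apart, never that they merge. Merging needs a separate argument producing surfaces in $\mathcal H(\mu)$ adjacent to two different components of $\mathcal H(2g-2)$. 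Finally, the claim that every component of $\mathcal H(\mu)$ contains surfaces obtained by splitting the zero is not automatic. It is itself the one-cylinder (Rauzy class) input: you shrink the boundary saddle connections of a one-cylinder surface. It belongs with the step you flagged as hardest rather than being taken as given.
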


For example, if $\mu$ contains at least one odd entry and $\mu \neq (g-1, g-1)$, then $\mathcal H(\mu)$ is connected. On the other hand, the minimal stratum $\mathcal H(2g-2)$ for $g \geq 4$ consists of three connected components $\mathcal H(2g-2)^{\rm hyp}$, $\mathcal H(2g-2)^{\rm odd}$, and $\mathcal H(2g-2)^{\rm even}$. For $(X,\omega)\in \mathcal H(2g-2)^{\rm hyp}$, the curve $X$ is hyperelliptic, and the unique zero $z$ of $\omega$ is a Weierstrass point, i.e., a ramification point of the hyperelliptic double cover. For $(X,\omega)\in \mathcal H(2g-2)^{\rm odd}$, $X$ is non-hyperelliptic and $(g-1)z$ is an odd theta characteristic, i.e., $h^0(X, (g-1)z) \equiv 1 \pmod{2}$. For $(X,\omega)\in \mathcal H(2g-2)^{\rm even}$, $X$ is non-hyperelliptic and $(g-1)z$ is an even theta characteristic, i.e., $h^0(X, (g-1)z) \equiv 0 \pmod{2}$. From now on, when referring to a stratum of differentials, we often mean one of its connected components when the stratum is disconnected.

\subsection{The Masur--Veech volume of $\mathcal H(\mu)$} 
\label{subsec:MV-volume}

Recall the local period coordinates of $\mathcal H(\mu)$ introduced above. Let $Z_1, \ldots, Z_{2g+n-1}$ be a basis of
$H^1(X, \{z_1,\ldots, z_n\}; \mathbb C)$ induced by integrating $\omega$ against a basis of $H_1(X, \{z_1,\ldots, z_n\}; \mathbb Z)$, where $Z_j = X_j + {\rm i} Y_j$.
Consider the standard Lebesgue measure  
\begin{align}
\label{eq:MV-local}
 \nu = \prod_{j=1}^{2g+n-1} dX_j \wedge dY_j = \prod_{j=1}^{2g+n-1} \frac{{\rm i}}{2} dZ_j \wedge d\overline{Z}_j. 
 \end{align}
It is independent of the choice of a basis of $H_1(X, {z_1,\ldots, z_n}; \mathbb Z)$, since the change-of-basis matrix is integral with determinant $\pm 1$.

Since a differential parameterized in $\mathcal H(\mu)$ can be scaled arbitrarily, the above measure yields an infinite volume on the entire space $\mathcal H(\mu)$. To possibly obtain a finite volume, we consider the ``unit hyperboloid''
$\mathcal H(\mu)^{\leq 1}$ parameterizing translation surfaces $(X,\omega)$ with area $\leq 1$, where 
$${\rm Area} (X, \omega) = \frac{{\rm i}}{2}\int_{X} \omega \wedge \overline{\omega}. $$
We then define the {\em Masur--Veech volume} of $\mathcal H(\mu)$ to be  
$$ {\rm Vol}\,\mathcal H(\mu) \coloneqq \dim_{\mathbb R} \mathcal H(\mu)\cdot \int_{\mathcal H(\mu)^{\leq 1}} d\nu. $$

It is worth noting that even if we restrict to the locus $\mathcal H(\mu)^{\leq 1}$ of translation surfaces with bounded area, the space is not compact, and in principle there could be increasingly large volume contributions when approaching the boundary of the stratum. Hence, the finiteness of the volume on $\mathcal H(\mu)^{\leq 1}$ is a priori unclear. Nevertheless, Masur and Veech independently established the following finiteness result.  

\begin{theorem}[\cite{M82,V82}]
\label{thm:finite}
${\rm Vol}\,\mathcal H(\mu)$ is finite for all strata of holomorphic differentials. 
\end{theorem}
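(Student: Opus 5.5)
The plan is to prove finiteness through a thick--thin decomposition of the area-$\le 1$ locus, using flat geometry to control the thin part. It is convenient to work instead with $\mathcal H_1(\mu)$, the locus of area exactly $1$. By homogeneity (area is quadratic in the period coordinates), ${\rm Vol}\,\mathcal H(\mu)$ is a constant multiple of the induced cone measure on $\mathcal H_1(\mu)$. Finiteness of that measure is what we must show.

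\emph{Compact part.} For $\epsilon>0$, let $K_\epsilon\subset \mathcal H_1(\mu)$ be the set of unit-area surfaces all of whose saddle connections have flat length $\ge \epsilon$. We first show $K_\epsilon$ is compact.
\begin{itemize}
\item A lower bound on saddle connections bounds below the flat systole, hence (by comparison of flat and hyperbolic metrics in a fixed conformal class) the hyperbolic systole of $X$.
\item Mumford's compactness criterion then places the underlying curves in a compact subset of $\mathcal M_g$.
\item Over a compact set of curves, the unit-area differentials form a compact set in the Hodge bundle.
\item The saddle-connection bound also keeps zeros from colliding, so we stay inside the stratum $\mathcal H(\mu)$.
\end{itemize}
Since $\nu$ is locally Lebesgue in period coordinates, $K_\epsilon$ has finite measure.

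\emph{Thin part.} It remains to bound the measure of the complement, the surfaces with some saddle connection shorter than $\epsilon$.
\begin{itemize}
\item Cover a neighborhood of each point by a period chart in which some short saddle connections (in fact, a maximal collection of short, homologically independent ones, as in a Veech zippered-rectangle or Masur--Smillie style decomposition) are basis elements $Z_1,\dots,Z_k$. The remaining coordinates are bounded because the area is $1$.
\item The key estimate is a Siegel--Veech/Masur--Smillie type bound: the measure of the set where a given relative period satisfies $|Z_j|<\delta$ is $O(\delta^2)$. This is just the area of a disk in the $Z_j$-plane, times a bounded measure in the other coordinates.
\item One must also control the combinatorial multiplicity, i.e.\ how many distinct saddle connections can simultaneously be short. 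On a unit-area flat surface, the number of saddle connections of length $<\delta$ is bounded by a constant depending only on $g,n$, since the number of pairwise nonparallel short ones is bounded by the complexity of a triangulation.
\item Summing over dyadic shells $\delta\in[2^{-k-1},2^{-k}]$ and over the finitely many combinatorial types of short families then gives a convergent series.
\end{itemize}

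The main obstacle is making the thin-part estimate uniform. When several homologous or nested cylinders pinch simultaneously, one short coordinate $Z_j$ does not by itself control the local geometry: cylinder twist and height parameters grow as the circumference shrinks. The integration over the twist, which ranges over an interval of length comparable to the cylinder height, must be shown to contribute only a factor like $|Z_j|^{-1}\cdot(\text{area})$, so that the net measure is $\int |Z_j|\,d|Z_j|\cdot|Z_j|^{-1}\cdots$, which is still integrable.

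To handle this I would follow Masur's original strategy. Decompose each thin surface into a thick part plus a union of long flat cylinders, and parametrize each cylinder by its circumference $c$, height $h$ and twist $t$ with $ch\le 1$. The measure is then $\int dc^2\,dh\,dt$ over $t\in[0,c]$, which is finite. Induction on the number of cylinders then closes the argument, and I expect verifying that this decomposition exists with uniform constants to be the hardest step.
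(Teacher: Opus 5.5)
There is a genuine gap in the thin part. Your compact part is fine: the comparison it needs is $\ell_{\mathrm{flat}}(\gamma)^2\le \mathrm{Ext}(\gamma)$ for a unit-area flat metric, together with Maskit's bound $\mathrm{Ext}(\gamma)=O(\ell_{\mathrm{hyp}}(\gamma))$ for short curves. In the thin part, however, you assert the uniform $O(\delta^2)$ bound rather than prove it, and the claims you offer in its support are false.
\begin{itemize}
\item The other period coordinates are not bounded by the area. A curve crossing a cylinder of circumference $c$ can have a period of size about $1/|c|$.
\item The number of saddle connections shorter than $\delta$ is not bounded in terms of $g$ and $n$. Glue a torus of area about $1$ to a torus of area $a\ll\delta^2$ along a very short slit, giving a surface in $\mathcal H(1,1)$. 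The small torus alone carries on the order of $\delta^2/a$ saddle connections of length $<\delta$. Only pairwise non-crossing short saddle connections are boundedly many.
\item Most seriously, a thin surface is \emph{not} a thick part plus long flat cylinders. Splitting the zero of a fixed surface in $\mathcal H(2)$ gives surfaces in $\mathcal H(1,1)$ with a saddle connection of length $\delta\to 0$ and no short closed curve at all. In the slit example, the short separating curve is surrounded by two expanding annuli, and there is no cylinder anywhere.
\end{itemize}
Such degenerations nest across several scales; this is exactly the level structure of the multi-scale compactification. An induction on the number of cylinders never sees them.

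As a result, your step ``disk in the $Z_j$-plane times a bounded measure in the other coordinates'' is circular as written. It presupposes that the surfaces obtained by collapsing $Z_j$ range over a set of finite measure, i.e.\ finiteness for a smaller stratum. The usual derivation of a global $O(\delta^2)$ bound, via the Siegel--Veech formula, presupposes finiteness of the total volume.

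To make your strategy work, you need an induction on the complexity of strata (dimension, allowing disconnected surfaces). You also need a proof, with uniform constants, that the thin part is covered by neighborhoods of degenerations, each modelled on a lower-complexity stratum times explicit small parameters whose measure you can integrate:
\begin{itemize}
\item $\int_{|v|<\epsilon}d^2v$ for a collapsing saddle connection;
\item your $\int d^2c\int_0^{1/|c|}dh\int_0^{|c|}dt$ for a cylinder, which is the right local model; but removing the cylinder leaves a surface with boundary, so a surgery is needed to land in a smaller stratum;
\item a cone-measure factor like $a^{d-1}\,da$ for a subsurface of area $a$ in a stratum of complex dimension $d$.
\end{itemize}
That covering statement is the real content of the theorem, not a technicality.

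The survey itself gives no proof and only cites Masur and Veech. Veech's argument, for instance, does not use a thick--thin decomposition of the stratum. It works in zippered-rectangle coordinates and controls the measure through the combinatorics of Rauzy--Veech induction.
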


The remaining question is therefore to compute the Masur--Veech volume of $\mathcal H(\mu)$ and to study its numerical properties. 

\subsection{Computing Masur--Veech volumes via the enumeration of square-tiled surfaces}
\label{subsec:MV-ST}

Square-tiled surfaces correspond to integral points under the period coordinates in a stratum of holomorphic differentials. Therefore, their asymptotic growth rate can determine the volume of the stratum. 

\subsubsection{Square-tiled surfaces and branched covers of tori} 

Let $(E, q)$ be a flat torus with a marked point $q$, i.e., a complex elliptic curve, where the flat metric corresponds to a nowhere vanishing holomorphic differential $dz$. Given a partition $\mu = (m_1, \ldots, m_n)$ of $2g - 2$, consider connected degree-$d$ branched covers
$$\pi\colon X\to E$$ 
of genus $g$ such that the only branch point of $\pi$ is $q$, and the ramification type over $q$ is $\mu$, that is,
$$ \pi^{*}(q) = (m_1+1)z_1 + \cdots + (m_n+1)z_n + w_1 + \cdots + w_{d-(2g-2+n)}, $$
where the $z_i$ and $w_j$ are distinct. In other words, the ramification points of $\pi$ are $z_1, \ldots, z_n$, with each $z_i$ having ramification order $m_i$.

Note that this setup satisfies the Riemann--Hurwitz formula. In particular, the pullback differential
$\omega = \pi^{*}(dz)$ has zeros at $z_1, \ldots, z_n$, where each $z_i$ is a zero of order $m_i$. Therefore, $(X, \omega) \in \mathcal H(\mu)$.
From the viewpoint of translation surfaces, if $E$ is the unit-square flat torus with $q$ located at a vertex of the square, then the translation surface associated with $\omega$ tiles $X$ by $d$ unit squares, with the cone points $z_1, \ldots, z_n$ appearing at some of the vertices of these square tiles and the unramified preimage points $w_j$ of $q$ appearing at the remaining vertices. In this case, $(X, \omega)$ is called a {\em square-tiled surface}. The area of a square-tiled surface equals the number of unit squares that tile it, which coincides with the degree of the torus covering. 

The following result shows that square-tiled surfaces correspond to integral points in the strata of holomorphic differentials with respect to the period coordinates, and therefore play a fundamental role in the study of the moduli of differentials and Teichm\"uller dynamics.  

\begin{lemma}[{\cite[Lemma 3.1]{EO01}}]
Let $(X,\omega)$ be a holomorphic differential of type $\mu$,  parameterized by the stratum $\mathcal H(\mu)$. Then every period coordinate of $(X,\omega)$ lies in $\mathbb Z\oplus \mathbb Z{\rm i}$ if and only if $(X,\omega)$ is a square-tiled surface.  
\end{lemma}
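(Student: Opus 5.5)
The plan is to prove the two implications separately, using the Abel--Jacobi type map $X\to \mathbb C/(\mathbb Z\oplus\mathbb Z{\rm i})$ given by integrating $\omega$ from a base zero.

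\emph{Square-tiled implies integral periods.} If $\omega=\pi^*(dz)$ for a branched cover $\pi\colon X\to E=\mathbb C/(\mathbb Z\oplus\mathbb Z{\rm i})$ branched only over $q$, then for any closed path $\gamma$ on $X$ we have $\int_\gamma\omega=\int_{\pi_*\gamma}dz$. This lies in the period lattice $\mathbb Z\oplus\mathbb Z{\rm i}$ of $dz$. For a relative path $\gamma$ joining $z_i$ to $z_n$, the image $\pi\circ\gamma$ starts and ends at $q$, so it is a closed loop on $E$. Its $dz$-integral is again a lattice vector. Hence all period coordinates are integral.

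\emph{Integral periods imply square-tiled.} Suppose all relative periods lie in $\Lambda=\mathbb Z\oplus\mathbb Z{\rm i}$. I will define
\[
\pi\colon X\to E=\mathbb C/\Lambda,\qquad \pi(x)=\int_{z_n}^{x}\omega \bmod \Lambda .
\]
This is well defined because two paths from $z_n$ to $x$ differ by a closed cycle, whose period is in $\Lambda$.

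The map $\pi$ is holomorphic and satisfies $\pi^*(dz)=\omega$. Since $\omega\neq 0$, $\pi$ is nonconstant, so it is a finite branched cover. Its degree equals the area: $d=\mathrm{Area}(X,\omega)/\mathrm{Area}(E)=\mathrm{Area}(X,\omega)$. Because $\pi^*dz=\omega$ and $dz$ has no zeros, $\pi$ is ramified exactly at the zeros of $\omega$, with local degree $m_i+1$ at $z_i$.

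Each $z_i$ is joined to $z_n$ by a relative cycle with integral period, so $\pi(z_i)=\pi(z_n)=0=:q$. Therefore all branching lies over the single point $q$, and pulling back the unit square tiling of $E$ (with $q$ at a vertex) tiles $X$ by $d$ unit squares. Connectedness of $X$ gives a connected cover, so $(X,\omega)$ is square-tiled in the sense defined above.

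The only delicate point is the well-definedness and the claim that the branch locus is exactly $\{q\}$. The latter needs integrality of the \emph{relative} periods, not just the absolute ones. With only absolute periods integral, we would still get a cover of $E$, but the zeros could map to different points, and the cover would not be branched over a single point. So the key step is the argument that relative periods in $\Lambda$ force $\pi(z_i)=q$ for every $i$, which follows directly from the basis paths $\gamma_{2g+i}$.
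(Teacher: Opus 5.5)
Your proposal is correct and follows the same route as the paper's sketch: you push paths forward under $\pi$ to show the periods are integral, and for the converse you use the integrated map $x\mapsto \int^{x}\omega \bmod \mathbb Z\oplus\mathbb Z{\rm i}$ as the covering map. Your extra details fill in what the paper's sketch (which uses an arbitrary base point $x_0$) leaves implicit: basing the integral at $z_n$ so that every $z_i$ lands on the vertex $q$, and observing that integrality of the relative periods is what forces a single branch point.
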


\begin{proof}[Sketch of proof]
Let $\gamma$ be a path in $H_1(X, \{z_1,\ldots, z_n\}; \mathbb Z)$. If $(X,\omega)$ is a square-tiled surface, then  $$\int_{\gamma}\omega = \int_{\pi_{*}\gamma} dz. $$
Note that $\pi_{*}\gamma \in H_1(E, q; \mathbb Z)$, where $E$ is the unit-square torus. Therefore, $\int_{\gamma}\omega \in \mathbb Z \oplus \mathbb Z{\rm i}$.

Conversely, if $\int_{\gamma}\omega \in \mathbb Z \oplus \mathbb Z{\rm i}$ for all $\gamma \in H_1(X, \{z_1,\ldots, z_n\}; \mathbb Z)$, then 
$$\pi(x) = \int_{x_0}^x \omega$$ 
defines the desired covering map 
$\pi\colon X \to \mathbb C/ \mathbb Z\oplus \mathbb Z{\rm i} \cong E$, where $x_0\in X$ is a fixed base point. 
\end{proof}

\subsubsection{Large degree asymptotics of counting square-tiled surfaces} 

Let $C_d(\mu)$ denote the number of square-tiled surfaces of area $d$ in $\mathcal H(\mu)$, which coincides with the Hurwitz number of degree-$d$ connected torus coverings with a single branch point whose ramification type is $\mu$. Since $\mathcal H(\mu)$ is a cone, and a real scaling $t\omega$ multiplies the area of the flat surface by $t^2$, the Masur--Veech volume of $\mathcal H(\mu)$ is therefore determined by 
$$ \lim_{D\to \infty} D^{-\dim_{\mathbb C}\mathcal H(\mu)} 
\sum_{d=1}^D C_d(\mu)
$$
up to an explicit volume normalizing factor. 

As a starting point, Eskin and Okounkov showed that the generating function 
$$ C(\mu) = \sum_{d=0}^{\infty } q^d C_d(\mu) $$ 
is a quasimodular form in the variable $q$. They further developed a delicate algorithm to extract the leading coefficient of $C(\mu)$ using the $n$-point function and cumulants. In particular, they proved the following remarkable rationality result, originally conjectured by Kontsevich and Zorich.

\begin{corollary}[{\cite[Remark 6.8]{EO01}}]
\label{cor:EO-rational}
Let $\mathcal H(\mu)$ be the stratum of holomorphic differentials of genus $g$ and type $\mu$. Then 
$$ \frac{1}{\pi^{2g}} {\rm Vol}\, \mathcal H(\mu) \in \mathbb Q. $$
\end{corollary}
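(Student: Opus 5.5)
The plan is to derive the statement from the two facts just recalled: the Masur--Veech volume is governed by the large-degree asymptotics of the square-tiled surface counts $C_d(\mu)$, and the generating function $C(\mu)=\sum_d q^d C_d(\mu)$ is a quasimodular form. Concretely, I will first fix the normalization. The lattice $\mathbb Z\oplus\mathbb Z{\rm i}$ in period coordinates has covolume $1$ for the measure $\nu$, and area scales quadratically. So, up to an explicit rational factor from the cone construction and the factor $\dim_{\mathbb R}\mathcal H(\mu)=2(2g+n-1)$,
$$\mathrm{Vol}\,\mathcal H(\mu)=2\dim_{\mathbb C}\mathcal H(\mu)\cdot\lim_{D\to\infty}D^{-\dim_{\mathbb C}\mathcal H(\mu)}\sum_{d\le D}C_d(\mu).$$
One must also account for automorphisms and for the labeling of zeros; these contribute only rational factors. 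It therefore suffices to show that the leading growth constant of $\sum_{d\le D}C_d(\mu)$ lies in $\pi^{2g}\mathbb Q$.

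Next I will use quasimodularity. By Eskin--Okounkov, $C(\mu)$ (or rather its disconnected version, from which the connected counts are obtained by inclusion--exclusion over partitions, a process that only involves rational combinations of products) lies in the ring $\mathbb Q[E_2,E_4,E_6]$. Its weight is bounded by $w=\sum_i(m_i+2)=2g-2+2n$, and the top-weight part controls the leading asymptotics. The key analytic step is the behavior as $q=e^{-t}\to 1$. Each Eisenstein series satisfies $E_{2k}(e^{-t})\sim c_k\, t^{-2k}$ with $c_k\in\pi^{2k}\mathbb Q$: for instance $E_2\sim 6\cdot\frac{(2\pi)^2}{12}\cdot\frac{1}{(2\pi)^2}\cdot\ldots$ follows from the modular transformation $\tau\mapsto -1/\tau$, with the correction term of $E_2$ being of lower order. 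Hence a weight-$w$ quasimodular form with rational coefficients has leading singularity $A\,t^{-w}$ with $A\in\pi^{w}\mathbb Q$. A Tauberian argument, valid since $C_d\ge 0$, converts this into $\sum_{d\le D}C_d\sim A\,D^{w}/\Gamma(w+1)$. One must check that $w$ matches $\dim_{\mathbb C}\mathcal H(\mu)=2g+n-1$ after accounting for the cumulant structure. In Eskin--Okounkov the connected contribution has effective leading order $t^{-(2g+n-1)}$, i.e. a power $\pi^{2g+n-1}$ times a rational number, divided by appropriate powers of $2\pi$ or $\pi$ arising from the $n-1$ relative periods.

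The main obstacle is exactly this bookkeeping: showing that the surviving power of $\pi$ is $\pi^{2g}$ and not $\pi^{2g+n-1}$. My first attempt would be to follow Eskin--Okounkov's $n$-point function formalism. There the relevant leading coefficient is extracted from the cumulants of shifted power sums $\mathbf p_{m_i+1}$, and each insertion of a ramification point carries a normalization $1/(m_i+1)$ together with factors of $(2\pi{\rm i})$ that cancel against the Eisenstein constants for the relative directions. Only the $2g$ absolute directions genuinely produce $\zeta(2k)$-type contributions, and $\zeta(2k)\in\pi^{2k}\mathbb Q$ by Euler. I would verify the count of powers of $\pi$ by degree considerations: the leading term of the connected cumulant is homogeneous of degree $2g$ in the Eisenstein constants, since the genus is detected by the Euler characteristic in the Riemann--Hurwitz count. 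This yields $\pi^{-2g}\mathrm{Vol}\,\mathcal H(\mu)\in\mathbb Q$.
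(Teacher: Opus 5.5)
\textbf{There is a genuine gap.} Your route (quasimodularity of the square-tiled series, asymptotics as $q=e^{-t}\to1$, then a Tauberian step using $C_d\ge0$) is the Eskin--Okounkov route the paper cites. However, the step you single out as the main obstacle is exactly the content of the statement, and you resolve it with claims that are false.

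For $n\ge2$ one has $w=2g-2+2n>2g+n-1=\dim_{\mathbb C}\mathcal H(\mu)$. So in the connected series the $t^{-w}$ coefficient $A\in\pi^w\mathbb Q$ that you computed is $0$, and your formula $\sum_{d\le D}C_d\sim AD^w/\Gamma(w+1)$ gives no information. The surviving coefficient of $t^{-(2g+n-1)}$ is not ``$\pi^{2g+n-1}$ times a rational number''. Nor are any powers of $\pi$ divided out by relative periods: passing from lattice-point counts to $\mathrm{Vol}\,\mathcal H(\mu)$ introduces only rational factors (covolume $1$, $\dim_{\mathbb R}\mathcal H(\mu)$, a factorial from the Tauberian step, labelings and automorphisms). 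The ``homogeneous of degree $2g$ in the Eisenstein constants'' heuristic is not an argument.

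The missing idea is precisely the term you discarded as lower order. From $E_2(-1/\tau)=\tau^2E_2(\tau)+\frac{6\tau}{\pi{\rm i}}$ one gets
$E_2(e^{-t})=-\frac{4\pi^2}{t^2}+\frac{12}{t}+O(t^{-2}e^{-4\pi^2/t})$.
By contrast, a modular form of even weight $k$ with rational coefficients equals $(2\pi{\rm i})^k a_0\, t^{-k}$ up to an exponentially small error. Expanding $E_2^j f_{w-2j}$, each factor $12/t$ used in place of $-4\pi^2/t^2$ lowers the pole order by one and the power of $\pi$ by two. Hence for $F\in\mathbb Q[E_2,E_4,E_6]$ of pure weight $w$, the coefficient of $t^{-k}$ in $F(e^{-t})$ lies in $\pi^{2k-w}\mathbb Q$. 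With $k=2g+n-1$ and $w=2g-2+2n$ this is exactly $\pi^{2g}$.

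As a check, take the genus-two series $\frac{1}{103680}(10E_2^3-6E_2E_4-4E_6)$ relevant to $\mathcal H(1,1)$. Its $t^{-6}$ term vanishes, and its $t^{-5}$ coefficient is $\frac{2\pi^4}{45}$, produced entirely by the anomaly term. This is not in $\pi^5\mathbb Q$, contrary to your claim.

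You would still need one more input. The lower-weight pieces of the connected series (coming from the non-homogeneous central characters and from inclusion--exclusion) must have pole order strictly below $2g+n-1$. Otherwise a weight-$w'$ piece would contribute to $\pi^{2g+w-w'}\mathbb Q$. This is what Eskin--Okounkov's cumulant analysis of the $n$-point function supplies.

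For comparison, the paper also points to a one-line explanation. By Theorem~\ref{thm:abelian-volume-int}, $\mathrm{Vol}\,\mathcal H(\mu)$ equals $\frac{2(2\pi{\rm i})^{2g}}{(2g-3+n)!}$ times an intersection number of $\mathbb Q$-Cartier classes. Hence it lies in $\pi^{2g}\mathbb Q$, with the exponent $2g$ tied to the absolute periods.
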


In Section~\ref{subsec:intersection}, we will see that the exponent $2g$ in the $\pi$-factor can be explained from the viewpoint of intersection theory. 

\subsection{Compactifications of the strata of differentials}
\label{subsec:compactification}

Before carrying out intersection calculations on any moduli space, a prerequisite is to compactify the moduli space, such that the compactified moduli space has a geometrically understandable boundary parameterizing degenerate elements with similar structures. 

There are several ways in which differentials $(X, \omega)$ parameterized by $\mathcal H(\mu)$ can degenerate. First, one can rescale $\omega$ to be arbitrarily large or small. This issue can be easily resolved by working with the projectivized stratum parameterizing canonical divisors of type $\mu$: 
$$\mathbb P\mathcal H(\mu) = \mathcal H(\mu)/\mathbb C^{*},$$ 
which is analogous to passing from a vector space to its projectivization. More seriously, if we deform the local period coordinates too much, the underlying translation surface may change its zero type---for instance, when two distinct zeros collide---or even fail to remain a smooth Riemann surface, as can happen when an absolute period shrinks to zero. To address these problems, we can work with Deligne--Mumford stable nodal curves whose marked points correspond to the zeros of the differentials. 

\subsubsection{Twisted differentials} 

Over the past decade, compactifying the strata of differentials has been a major breakthrough in the study of moduli of differentials and Teichm\"uller dynamics. A series of works, based on approaches from limit linear series, logarithmic geometry, and plumbing, all lead to the same type of degenerate objects known as twisted differentials. In brief, a {\em twisted differential} on a nodal curve $Y$ consists of $(Y_i, \eta_i)$, where each $Y_i$ is an irreducible component of $Y$ and $\eta_i$ is a possibly meromorphic differential on $Y_i$. At each node connecting two components $Y_i$ and $Y_j$, the differential $\eta_i$ has a zero on one side (say on $Y_i$) and $\eta_j$ has a pole on the other side, such that the zero and pole orders add up to $-2$. Here we assign a negative sign to the order of a pole to emphasize that it represents a pole rather than a zero. 

To understand how twisted differentials arise, consider a family of differentials $\omega_t$ degenerating from smooth curves $X_t$ to a nodal curve $Y$ as $t \to 0$. We assume that the zeros of $\omega_t$, viewed as marked points, converge to smooth points of $Y$ away from its nodes; in other words, we work with stable curves whose marked points correspond to the zeros of the differentials. However, the limit of $\omega_t$ may become identically zero on an irreducible component of $Y$, thus losing the information carried by that component. To address this issue, near each irreducible component $Y_i$ of $Y$, we can write 
$$\omega_t = t^{\ell_{Y_i}} \eta_t,$$ 
where $\ell_{Y_i} \in \mathbb N$ measures the vanishing rate of $\omega_t$ along $Y_i$. Equivalently, the twisted limit 
$$\eta_{Y_i} = \lim_{t\to 0} t^{-\ell_{Y_i}} \omega_t$$ 
is a non-identically-zero differential on $Y_i$. If $\ell_{Y_i} > \ell_{Y_j}$, this process produces poles for $\eta_i$ and zeros for $\eta_j$ at the nodes connecting $Y_i$ and $Y_j$. Their orders satisfy the relation coming from the local coordinate change $z_i z_j = t$, namely, $z_i^{m}dz_i \sim z_j^{-m-2}dz_j$ up to a scalar, so that $m + (-m - 2) = -2$. From the flat-geometric viewpoint, the subsurface around $Y_i$ shrinks faster than that around $Y_j$, making $Y_j$ appear larger—just as the flat neighborhood of a zero of order $m$ expands to a pole of order $-(m+2)$. Therefore, comparing these twisting exponents $-\ell_{Y_i}\in \mathbb Z^{\leq 0}$ induces a level structure on the dual graph of $Y$, in which the upper end of an edge corresponds to a zero of order $m$ of the twisted differential, and the lower end corresponds to a pole of order $-(m+2)$.  

\subsubsection{The global residue condition and multi-scale differentials} 

Conversely, given a twisted differential, one can ask whether it can be deformed into the interior of the stratum $\mathcal H(\mu)$. It turns out that there is one obstruction, called the {\em global residue condition}. In short, suppose $V = \{q_1,\ldots, q_k\}$ is a subset of vertical nodes of $Y$, whose highest polar lower ends lie on level $-\ell$, such that removing $V$ disconnects $Y$, i.e., the vanishing cycle $V_t$ near $V$ is a trivial homology class in $H_1(X_t; \mathbb Z)$. It follows that $\int_{V_t}\omega_t = 0$, and hence
\begin{align}
\label{eq:GRC}
\lim_{t\to 0}\int_{V_t} t^{-\ell} \omega_t = 0.
\end{align}
Since $\lim\limits_{t\to 0} (t^{-\ell} \omega_t)|_{Y_i} = \eta_{Y_i}$ for every component $Y_i$ on level $-\ell$, we obtain from \eqref{eq:GRC} that 
$$ \sum_{q_j^-\in Y_i} {\rm Res}_{q_j^-} \eta_{Y_i} = 0,$$
where $q_j^-$ denotes the polar lower end of $q_j$. 

Bainbridge, Gendron, Grushevsky, M\"oller, and the first-named author showed that a twisted differential of type $\mu$ can be smoothed into $\mathcal H(\mu)$ if and only if it satisfies the global residue condition; see \cite{BCGGM1}. Twisted differentials satisfying this condition are called {\em multi-scale differentials}, and their moduli space gives the most complete (and complicated) compactification of $\mathcal H(\mu)$ to date; see \cite{BCGGM2}. 

\subsubsection{A summary of strata compactifications} 

Depending on the motivation and application, several related compactifications have been studied, introducing additional boundary components and alternative interpretations of the global residue condition. To avoid obscuring our main goal, we simply refer to the following works: for compactifications without the global residue condition from the viewpoints of plumbing, limit linear series, and logarithmic geometry, see \cite{G18, C17, FP18, G16, CC19, CGHMS}; for interpretations of the global residue condition in tropical and Berkovich geometry as well as using Gorenstein singularities, see \cite{MMW21, TT22, BB24, CC24}; for a similar compactification of strata of $k$-differentials, see \cite{BCGGMk, CMZ24}; for a non-algebraic compactification from the flat-geometry viewpoint, see \cite{MW17, CW21}; for compactifications of linear subvarieties in the strata, see \cite{B23, BDG22}; and for an expository note comparing some of these compactifications, see \cite{D25}. 

From now on, we will simply denote by $\mathbb P \overline{\mathcal H}(\mu)$ the compactified stratum of multi-scale differentials of type $\mu$ constructed in \cite{BCGGM2}. The boundary of $\mathbb P \overline{\mathcal H}(\mu)$ consists of differentials of lower genera with more specialized singularity orders, providing a convenient setting for carrying out intersection calculations recursively. 

\subsection{Computing Masur--Veech volumes via intersection theory}
\label{subsec:intersection}

Just like the projectivization of a vector space, the projectivized stratum $\mathbb P\mathcal H(\mu)$ of holomorphic differentials of type $\mu$ carries a tautological holomorphic line bundle $\mathcal O(-1)$, whose fiber over a projective equivalence class $[(X, \omega)]$ is spanned by $\omega$. The dual bundle of $\mathcal O(-1)$, denoted by $\mathcal O(1)$, is called the universal line bundle. 

\subsubsection{The area hermitian metric} 

Let ${(Z_i, Z_{g+i})}_{i=1}^g$ be the basis of the space of absolute period coordinates obtained by integrating $\omega$ along a symplectic basis of $H_1(X; \mathbb Z)$. By the Riemann bilinear relations, the area of the translation surface $(X, \omega)$ can be expressed as follows: 
\begin{align*}
{\rm Area} (X, \omega) & =  \frac{\rm i}{2} \int_{X} \omega \wedge \overline{\omega} \\
& = \frac{{\rm i}}{2} \sum_{i=1}^{g} (Z_i \overline{Z}_{g+i} - \overline{Z}_i Z_{g+i}).  
\end{align*}

Since the area functional is positive, it induces a singular Hermitian metric $h$ on the line bundle $\mathcal O(-1)$. By the Chern–Weil theory, the curvature $(1,1)$-form 
$$ \varpi = \frac{-1}{2\pi {\rm i}}\partial \overline{\partial}\log h $$
represents the first Chern class of the universal line bundle, denoted by 
$$ \xi = c_1 (\mathcal O_{\mathbb P\mathcal H(\mu)}(1)).$$

Note that $\varpi^{2g-1}$ captures the component of absolute periods in the Masur--Veech volume form~\eqref{eq:MV-local}, where the exponent is taken to be $2g-1$ instead of $2g$ due to the projectivization setting. In particular, for the minimal strata $\mathbb P\mathcal H(2g-2)$, we can expect its Masur--Veech volume to be determined by the top self-intersection number $\xi^{2g-1}$, up to an explicit normalization factor in volume.

However, to make the above idea rigorous, one obstacle is to show that the Hermitian metric induced by the area form remains good when extended to the boundary of the compactified strata, so that the corresponding curvature form still represents $\xi$. This technical issue was later justified by Costantini, M\"oller, and Zachhuber through explicit coordinate computations using the multi-scale compactification, and was addressed more generally by Nguyen from the viewpoint of variations of Hodge structures; see \cite{CMZ24, N25}. 

\subsubsection{Masur--Veech volumes as intersection numbers} 

Modulo the issue of extending to the boundary, Sauvaget established the following intersection-theoretic formula for the Masur--Veech volume of the minimal strata. 

\begin{proposition}[{\cite[Proposition 1.3]{S18}}]
\label{prop:minimal}
For $g\geq 1$, we have 
$$ {\rm Vol}\,\mathcal H(2g-2) = \frac{2(2\pi {\rm i})^{2g}}{(2g-1)!} \int_{\mathbb P\overline{\mathcal H}(2g-2)} \xi^{2g-1}. $$
\end{proposition}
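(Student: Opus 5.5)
The plan is to compare two ways of integrating over the cone $\mathcal H(2g-2)$: the Lebesgue measure $\nu$ of \eqref{eq:MV-local}, and the top power of the curvature form $\varpi$ of the area metric on $\mathcal O(-1)$. In the minimal stratum there is a single zero, so relative and absolute cohomology coincide and $\dim_{\mathbb C}\mathcal H(2g-2)=2g$. The period coordinates are then exactly the absolute periods $(Z_1,\dots,Z_{2g})$, and locally $\mathcal H(2g-2)$ is an open subset of $\mathbb C^{2g}$, modulo the mapping class group action. On this open set the area is the Hermitian form
\[
A(Z)=\tfrac{\mathrm i}{2}\sum_{i=1}^g\bigl(Z_i\overline Z_{g+i}-\overline Z_iZ_{g+i}\bigr).
\]
This form has signature $(g,g)$, but it is positive on the stratum. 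So locally $\mathbb P\mathcal H(2g-2)$ is an open subset of $\mathbb P^{2g-1}$ lying inside the "positive cone" $\{A>0\}$, and $\varpi=\frac{-1}{2\pi\mathrm i}\partial\bar\partial\log h$ with $h=A$ on the tautological line.

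The key local computation is the standard fibration identity for a cone over a projective space with a Hermitian form. Write $\mathcal H^{\le1}$ as the set of $Z=t\hat Z$ with $A(\hat Z)=1$, $t\in\mathbb C^*$, $|t|\le1$. Pushing forward $\nu|_{\{A\le1\}}$ along $\mathbb C^{2g}\setminus0\to\mathbb P^{2g-1}$ gives $c_g\,\varpi^{2g-1}$ for a universal constant $c_g$. I would verify this by $U(g,g)$-equivariance: both sides are invariant under the group preserving $A$ (the measure $\nu$ is unimodular, and $A$ is invariant). It then suffices to check at one point, say after a linear change making $A=|w_1|^2-\sum_{j\ge2}|w_j|^2$, in the affine chart $w_1=1$.

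In that chart:
\begin{itemize}
\item $\varpi=\frac{1}{2\pi\mathrm i}\partial\bar\partial\log(1-|u|^2)$, which at $u=0$ equals $-\frac{1}{2\pi\mathrm i}\sum du_j\wedge d\bar u_j$. The sign conventions and the factor $\mathrm i$ account for the $(2\pi\mathrm i)^{2g}$ in the statement rather than $(2\pi)^{2g}$ with a sign.
\item The fiber integral $\int_{|t|^2(1-|u|^2)\le1}|t|^{2(2g-1)}\,\frac{\mathrm i}{2}dt\wedge d\bar t$ evaluated at $u=0$ gives $\pi/(2g)$.
\item Combining this with the factor $(\frac{\mathrm i}{2})^{2g-1}$ from $\nu$ and $(2g-1)!$ from $\varpi^{2g-1}/(2g-1)!$ fixes $c_g$.
\end{itemize}
Multiplying by $\dim_{\mathbb R}=4g$ and tracking the orbifold factor, the constant should come out as $\frac{2(2\pi\mathrm i)^{2g}}{(2g-1)!}$. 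I expect an extra factor of $2$ relative to the naive count, coming from the generic automorphism $\omega\mapsto-\omega$: the hyperelliptic involution on the hyperelliptic component, and the convention for labeling or normalizing elsewhere. I will fix that factor by checking $g=1$. There $\mathcal H(0)$ is the space of flat tori, whose volume $\pi^2/3$ is classical, and $\int_{\mathbb P\overline{\mathcal H}(0)}\xi=\int_{\overline{\mathcal M}_{1,1}}\lambda_1=1/24$ up to sign. This gives $2(2\pi\mathrm i)^2/24=-\pi^2/3$, so the sign conventions match up to the orientation induced by $\mathrm i$ versus $-\mathrm i$.

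Globalizing, $\mathrm{Vol}\,\mathcal H(2g-2)=4g\,c_g\int_{\mathbb P\mathcal H(2g-2)}\varpi^{2g-1}$. The main obstacle is replacing this open integral by $\int_{\mathbb P\overline{\mathcal H}(2g-2)}\xi^{2g-1}$. This requires that $h$ extends as a good metric, with at most Poincaré-type growth, across the multi-scale boundary, so that $\varpi$ is a closed current whose powers put no mass on the boundary and represent $\xi^{2g-1}$. The paper takes this as given ("modulo the issue of extending to the boundary"), citing \cite{CMZ24, N25}, and I would simply invoke it. Finiteness of the left-hand side is guaranteed by Theorem~\ref{thm:finite}, which is consistent with this step.
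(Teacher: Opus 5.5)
Your overall route is the one the paper sketches: in $\mathcal H(2g-2)$ all periods are absolute, the pushforward of $\nu|_{\{A\le1\}}$ to $\mathbb P\mathcal H(2g-2)$ is a constant multiple of $\varpi^{2g-1}$, and the boundary results of \cite{CMZ24, N25} replace $\int\varpi^{2g-1}$ by $\int\xi^{2g-1}$. Once that boundary input is granted, though, the content of the proposition is the constant, and your derivation of it does not work.

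\begin{itemize}
\item \emph{Wrong normal form.} $A$ has signature $(g,g)$, so no linear change makes it $|w_1|^2-\sum_{j\ge2}|w_j|^2$, which has signature $(1,2g-1)$. In that model $\varpi$ would be definite, so $\varpi^{2g-1}$ would have a sign independent of the parity of $g$. No choice of ``conventions'' then produces the $\mathrm i^{2g}=(-1)^g$.
\item \emph{The factor $2$ is misattributed.} It does not come from $\omega\mapsto-\omega$. Generic points of non-hyperelliptic components have no such automorphism. On hyperelliptic components, the generic $\mathbb Z/2$ stabilizer on $\mathbb P\mathcal H$ cancels against the $2{:}1$ map from $\mathbb C^*$ onto the fiber.
\item \emph{Missing Jacobian.} You never account for the Jacobian of the linear change, and that is what turns $\pi^{2g}$ into $(2\pi)^{2g}$.
\item \emph{The $g=1$ check.} Fitting an unexplained factor at $g=1$ cannot establish it for all $g$. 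The sign you find there is not an orientation issue either: $\mathcal O(-1)$ on $\mathbb P\mathcal H(0)\cong\mathcal M_{1,1}$ is the Hodge bundle, so $\xi=-\lambda_1$ and $\int\xi=-\tfrac1{24}$. The formula then gives exactly $+\pi^2/3$.
\item \emph{Sign of $\varpi$.} The paper has $\varpi=\frac{-1}{2\pi\mathrm i}\partial\bar\partial\log h=\frac{\mathrm i}{2\pi}\partial\bar\partial\log h$; your chart formula has the opposite sign.
\end{itemize}

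Here is the computation that works. Set $w_j^{\pm}=\tfrac12(Z_j\mp\mathrm iZ_{g+j})$. Then $A=\sum_j|w_j^+|^2-\sum_j|w_j^-|^2$ and $\nu=4^g\,d\mathrm{Leb}(w)$. Near a point with $w_1^+\neq0$, write $w=t\,(1,u_2,\dots,u_{2g})$, ordered so that $u_2,\dots,u_g$ are the remaining $w^+$-directions and $u_{g+1},\dots,u_{2g}$ are the $w^-$-directions. Then $A=|t|^2h(u)$ with $h=1+\sum_{j=2}^{g}|u_j|^2-\sum_{j=g+1}^{2g}|u_j|^2$. The radial integral gives $p_*(\nu|_{\{A\le1\}})=4^g\,\frac{\pi}{2g}\,h^{-2g}\,d\mathrm{Leb}(u)$. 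Meanwhile $\varpi$ has $g-1$ positive and $g$ negative directions, and a direct determinant computation gives $\varpi^{2g-1}=(-1)^g(2g-1)!\,\pi^{1-2g}\,h^{-2g}\,d\mathrm{Leb}(u)$ on the whole chart. Hence
\[
p_*\bigl(\nu|_{\{A\le1\}}\bigr)=\frac{(2\pi\mathrm i)^{2g}}{2g\,(2g-1)!}\,\varpi^{2g-1}.
\]
Multiplying by $\dim_{\mathbb R}\mathcal H(2g-2)=4g$ gives exactly $\frac{2(2\pi\mathrm i)^{2g}}{(2g-1)!}$. The $\mathrm i^{2g}$ comes from the signature, the $2^{2g}$ from the Jacobian, and the $2$ is $4g/2g$. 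With this in place of your local computation and $g=1$ calibration, the argument is complete modulo the cited boundary extension, as in the paper.
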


It is worth noting that the area form does not capture the variations of relative periods joining two distinct zeros of $\omega$. Therefore, to establish an intersection-theoretic formula for the Masur--Veech volume of a stratum of holomorphic differentials with more than one zero, besides $\xi^{2g-1}$, we must utilize additional divisor classes that record the relative positions of the zeros. As seen in Witten's conjecture, the $\psi$-classes $\psi_i$ associated with the zeros $z_i$ of $\omega$ as marked points are natural candidates for this purpose. Indeed, M\"oller, Sauvaget, Zagier, and the first-named author established the following general formula.

\begin{theorem}[{\cite[Theorem 1.1]{CMSZ20}}]
\label{thm:abelian-volume-int}
For any stratum of holomorphic differentials with signature $\mu = (m_1,\ldots, m_n)$, we have 
\begin{align}
\label{eq:MV-abelian}
{\rm Vol}\, \mathcal H(\mu) 
= \frac{2 (2\pi {\rm i})^{2g}}{(2g-3+n)!} \int_{\mathbb P\overline{\mathcal H}(\mu)} \frac{1}{m_i+1} \xi^{2g-1}\psi_1\cdots \widehat{\psi}_i \cdots \psi_n 
\end{align}
for all $1\leq i\leq n$. 
\end{theorem}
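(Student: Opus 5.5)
The plan is to combine the area-metric interpretation of $\xi$ with a fibration of $\mathbb P\mathcal H(\mu)$ over the locus of "absolute periods". First I will reduce to the case where the volume is computed by integrating a form over $\mathbb P\mathcal H(\mu)$. In period coordinates, write $\nu$ as the product of an absolute part, in the $2g$ absolute periods, and a relative part, in the $n-1$ relative periods. Passing to the unit hyperboloid and then to the projectivization, the standard cone computation converts $\dim_{\mathbb R}\cdot\int_{\mathcal H^{\le1}}d\nu$ into an integral over $\mathbb P\mathcal H(\mu)$ of a top form. Its absolute part is proportional to $\varpi^{2g-1}$, by the same argument as in Proposition~\ref{prop:minimal}. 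The relative part must be expressed through classes that measure the relative positions of the zeros. This step should produce the prefactor $2(2\pi{\rm i})^{2g}/(2g-3+n)!$ together with an integrand of the form $\xi^{2g-1}\cdot\beta$, where $\beta$ has degree $n-1$.

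The core step is to identify $\beta$ with $\frac{1}{m_i+1}\prod_{j\neq i}\psi_j$. I would argue by induction on $n$, using degeneration to the boundary of the multi-scale compactification $\mathbb P\overline{\mathcal H}(\mu)$. The key tool is a divisor relation expressing $\psi_j$ on $\mathbb P\overline{\mathcal H}(\mu)$ in terms of $\xi$ and boundary divisors. The relation I would aim for is
\[
(m_j+1)\psi_j=\xi+\sum_{\Gamma}\ell_\Gamma[D_\Gamma],
\]
where the sum runs over two-level graphs $\Gamma$ in which $z_j$ lies on the lower level, and $\ell_\Gamma$ is the prong/enhancement multiplicity. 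I would prove this by comparing the tautological bundle $\mathcal O(-1)$ with the cotangent line at $z_j$: the leading coefficient of $\omega$ at $z_j$ gives a section of $\mathcal O(1)\otimes L_j^{\otimes(m_j+1)}$ that vanishes exactly along these boundary divisors.

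Substituting this relation into $\xi^{2g-1}\prod_{j\ne i}\psi_j$ then proceeds as follows. Terms with extra powers of $\xi$ vanish by the dimension count $\xi^{2g}=0$, since $\xi$ pulls back from the absolute-period map, which has dimension $2g-1$ projectively. The boundary terms split as products over the top and bottom levels. The bottom-level strata contain no absolute periods, so on them $\xi$ restricts trivially, and top-level strata of genus $g$ force all of $\xi^{2g-1}$ to live there. Only graphs in which the lower level is a genus-zero "bubble" containing the zeros survive. Those contributions match a recursive decomposition of the Masur--Veech measure, obtained by shrinking relative periods and colliding zeros, which is the same recursion satisfied by volumes in the Eskin--Masur--Zorich style. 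Independence of the choice of $i$ should follow by applying the $\psi$-relation to $\psi_i$ versus $\psi_k$ and checking that the difference is a sum of boundary terms that vanish against $\xi^{2g-1}$.

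The main obstacle is rigor at the boundary. I must show that the area metric on $\mathcal O(-1)$ is good enough near the boundary that $\varpi$ computes $\xi$ in top intersections; for this I would invoke \cite{CMZ24, N25}. I must also show that the relative part of $\nu$ is really represented by the $\psi$-classes, not merely up to boundary corrections. For the second point I would first try to construct explicit forms $\frac{{\rm i}}{2\pi}\partial\bar\partial\log|\text{relative period}|^2$ representing $\psi_j-\xi/(m_j+1)$ away from the boundary, and then control their singularities with the multi-scale coordinates.
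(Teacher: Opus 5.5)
There is a genuine gap, and it sits exactly where the paper says the direct approach is still open. Your first and last paragraphs write the Masur--Veech form on $\mathbb P\mathcal H(\mu)$ as $\varpi^{2g-1}\wedge\beta$. They then need the relative-period factor $\beta$ to be represented by $\frac{1}{m_i+1}\prod_{j\neq i}\psi_j$. This is precisely Problem~\ref{prob:psi}, and no such Hermitian metrics on the $\psi_j$-bundles are known. As it stands, $\beta$ is only a fiberwise Lebesgue measure along the leaves where the absolute periods are fixed. It is not a closed current with controlled boundary behavior representing a cohomology class. So $\int\varpi^{2g-1}\wedge\beta$ is not a priori an intersection number, and divisor relations, which act on classes, cannot identify $\beta$.

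Your proposed representatives do not help. A relative period is a multivalued holomorphic local section of $\mathcal O(1)$ (monodromy adds absolute periods). Hence $\frac{\rm i}{2\pi}\partial\bar\partial\log|\cdot|^2$ is pluriharmonic off its zero locus, or equals $-\varpi$ there once you normalize by the area. It carries no information about the measure in the relative directions. Moreover, $(m_j+1)\psi_j$ and $-\xi$ already agree on the open stratum, so any such representative is exact there. All the content sits in the boundary singularities, which you leave uncontrolled.

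The inductive part of your plan has the right skeleton, and it is essentially the paper's strategy. The $\xi$--$\psi$ relation, together with the vanishing of $\xi^{2g-1}$ on $D_\Gamma$ unless the top level has total genus $g$, gives a recursion for the intersection numbers with genus-zero lower levels. The base case $n=1$ is Proposition~\ref{prop:minimal}, using the area-metric extension of \cite{CMZ24, N25}.

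What is missing is an independent proof that the volumes satisfy the same recursion, and the source you invoke does not provide it. The Eskin--Masur--Zorich principal-boundary analysis measures thin neighborhoods of boundary strata. It expresses Siegel--Veech constants as ratios of boundary volumes to $\mathrm{Vol}\,\mathcal H(\mu)$, and never determines $\mathrm{Vol}\,\mathcal H(\mu)$ itself from smaller strata. In the paper, the volume-side recursion comes from the large-degree asymptotics of counting square-tiled surfaces (the Eskin--Okounkov quasimodularity and cumulant formalism). That is the ingredient you need. Once it is in place, the form-level decomposition of $\nu$ and the metric constructions are unnecessary beyond the base case.

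Two smaller points. First, with the paper's convention $\xi=c_1(\mathcal O(1))$, the section you describe gives $(m_j+1)\psi_j=-\xi+\sum_\Gamma\ell_\Gamma[D_\Gamma]$, so the sign of $\xi$ in your relation is flipped. Second, for independence of $i$, the boundary terms with both $z_i$ and $z_k$ on the lower level do not vanish against $\xi^{2g-1}\prod_{j\neq i,k}\psi_j$. They cancel between the two relations, while the terms with exactly one of them on the lower level vanish for dimension reasons.
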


Before elaborating on the proof of \eqref{eq:MV-abelian}, we make several observations. First, since the divisor classes are $\mathbb Q$-Cartier, their intersection numbers are rational. Hence, the above formula provides another explanation of the rationality result 
$$\frac{1}{\pi^{2g}} {\rm Vol}\, \mathcal H(\mu)\in \mathbb Q,$$ originally proved by Eskin and Okounkov through the enumeration of square-tiled surfaces, as reviewed in Corollary~\ref{cor:EO-rational}. 

Next, for the $\psi$-class product in \eqref{eq:MV-abelian}, omitting $\psi_i$ amounts to dividing by $m_i+1$. These choices lead to the same value essentially because of the divisor class relations
$$(m_i+1)\psi_i\sim (m_j+1)\psi_j$$ 
modulo the boundary divisor classes of the compactified strata; see \cite[Proposition 2.1 (i)]{C19}. These relations are intuitively reasonable, since $\omega$ can be locally represented as $d(z_i^{m_i+1})$, and the $\psi_i$-bundle corresponds to the cotangent space generated by $dz_i$ at each marked zero $z_i$.  

\subsubsection{The quest for the flat geometric meaning of $\psi$} 

In order to prove \eqref{eq:MV-abelian}, a natural idea would be to find a Hermitian metric on each $\psi_i$-bundle and show that the product of their curvature forms captures the component of relative periods in the Masur--Veech volume form \eqref{eq:MV-local}. Unfortunately, this approach has not yet been carried out successfully. Instead, the proof in \cite{CMSZ20} proceeds in a rather indirect way: the authors showed  that the intersection numbers on the right-hand side of \eqref{eq:MV-abelian} satisfy certain recursive relations, which also hold for the Masur--Veech volumes of the strata obtained from large degree  asymptotics of enumerating square-tiled surfaces. Since both quantities coincide on the minimal strata $\mathcal H(2g-2)$, as seen in Proposition~\ref{prop:minimal}, the two systems of quantities---the Masur--Veech volumes and the intersection numbers in \eqref{eq:MV-abelian}---therefore coincide for all strata $\mathcal H(\mu)$. 
We summarize the open problems in this discussion as follows. 

\begin{problem}
\label{prob:psi}
Let $\psi_i$ denote the $\psi$-bundle associated with the $i$th zero of differentials in $\mathcal H(\mu)$. Construct Hermitian metrics on the bundles $\psi_i$ such that the product of their curvature forms corresponds to the component of relative periods in the Masur--Veech volume form~\eqref{eq:MV-local}. 

Furthermore, investigate the extension of these Hermitian metrics to the boundary of the multi-scale compactification. As a consequence, derive a direct proof of the intersection-theoretic formula~\eqref{eq:MV-abelian} for computing the Masur--Veech volume of~$\mathcal H(\mu)$. 
\end{problem}

\subsubsection{Large genus asymptotics of Masur--Veech volumes} 

Based on numerical experiments, Eskin and Zorich formulated a precise conjectural description of the large genus asymptotics of Masur--Veech volumes for the strata of holomorphic differentials. Using volume recursions expressed through intersection numbers, this conjecture was proved by M\"oller, Sauvaget, Zagier, and the first-named author, while an alternative proof was provided around the same time by Aggarwal via a delicate combinatorial analysis of the algorithm developed by Eskin and Okounkov. 

\begin{theorem}[A condensed version of {\cite[Main Conjecture 1]{EZ15}, \cite[Theorem 1.4]{A20}, \cite[Theorem 1.5]{CMSZ20}}]
\label{thm:MV-large}
The large genus asymptotics of Masur--Veech volumes of the strata of holomorphic differentials satisfy  
$$ \lim_{g\to \infty} (m_1+1)\cdots (m_n+1) {\rm Vol}\,\mathcal H(m_1,\ldots, m_n) = 4. $$
\end{theorem}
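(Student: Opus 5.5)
The plan follows the intersection-theoretic route of \cite{CMSZ20}. It has three stages: a recursion for the normalized volumes, an analysis of that recursion in large genus, and a transfer from the minimal stratum to all strata.

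\emph{Normalization and recursion.} Set
$$v(\mu)=(m_1+1)\cdots(m_n+1)\,{\rm Vol}\,\mathcal H(\mu),$$
viewed as a function on partitions of $2g-2$. Using Theorem~\ref{thm:abelian-volume-int}, write $v(\mu)$ as an intersection number of $\xi^{2g-1}$ with $\psi$-classes on $\mathbb P\overline{\mathcal H}(\mu)$. The relations $(m_i+1)\psi_i\sim(m_j+1)\psi_j$ modulo boundary divisors then let one trade a $\psi$-class for a combination of boundary strata.

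Each boundary stratum is a two-level graph. Its top level is a product of holomorphic strata of lower genus, and its bottom level is a meromorphic stratum. The meromorphic contributions should be expressible through simple combinatorial constants, as in the genus-zero computations of \cite{CMSZ20}. This produces a quadratic recursion of the schematic form
$$v(\mu)=\sum_{\mu_1\sqcup\mu_2\leadsto\mu}c(\mu_1,\mu_2)\,v(\mu_1)\,v(\mu_2),$$
where the coefficients $c$ are explicit and the terms come from splitting off a genus-one or lower-genus piece. The base case is Proposition~\ref{prop:minimal} for the minimal strata.

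\emph{Asymptotic analysis.} Package the recursion into a generating function, essentially the Eskin--Okounkov/Sauvaget generating series in a variable $t$ tracking the genus. Sauvaget's formula for the minimal strata reduces to a closed-form series built from $\sin$-type functions: $\int\xi^{2g-1}$ on $\mathbb P\overline{\mathcal H}(2g-2)$ is the coefficient of a series such as $\frac{t/2}{\sin(t/2)}$ raised to a power, inverted via a Lagrange-type transform.

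From this closed form, extract asymptotics by singularity analysis. The dominant singularity at $t=\pm 2\pi$ gives the growth $(2\pi)^{-2g}$. This exactly cancels the $(2\pi{\rm i})^{2g}$ normalization, leaving the limit $4$ for the minimal strata.

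\emph{Transfer to all strata.} Extend from $\mu=(2g-2)$ to general $\mu$ with $n$ fixed, or even $n$ growing slowly. Use the recursion to show that $v(\mu)-v(2g-2)\to0$ uniformly, by bounding the correction terms. Each correction term involves a product $v(\mu_1)v(\mu_2)$ with $g_1,g_2\ge1$. Such a term carries a combinatorial factor that decays like $1/g$ relative to the main term, by estimates on $\binom{2g}{2g_1}^{-1}$-type weights. An induction on $g$ with an a priori bound $v(\mu)\le C$ for all $\mu$ then closes the argument.

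\emph{Main obstacle.} I expect the hardest step to be this uniformity. The number of terms in the recursion grows with $n$, and one must prove that the sum of all subleading contributions is $o(1)$ uniformly in $\mu$ rather than just for each fixed $n$. I would first try to establish a monotonicity or convexity property of $v(\mu)$ under merging zeros, so that the volumes for general $\mu$ are sandwiched between comparable quantities. Failing that, I would fall back on Aggarwal-style explicit estimates of the Eskin--Okounkov cumulants to control the error terms.
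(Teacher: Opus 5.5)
The survey gives no proof of this statement. It only attributes it to the intersection-theoretic volume recursion of \cite{CMSZ20}, and independently to Aggarwal's analysis of the Eskin--Okounkov algorithm \cite{A20}. Your outline follows the \cite{CMSZ20} route, but the recursion you describe has the wrong shape, and the step that carries the theorem is left open.

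First, the relations $(m_i+1)\psi_i\sim(m_j+1)\psi_j$ produce no recursion. Trading $\psi_2$ for $\psi_1$ in $\xi^{2g-1}\psi_2\cdots\psi_n$ just returns another expression for the same volume, so they only show that the right-hand side of \eqref{eq:MV-abelian} does not depend on which $\psi_i$ is omitted. A recursion comes instead from two ingredients:
\begin{enumerate}
\item the relation $(m_2+1)\psi_2=\xi+\sum_\Gamma \ell_\Gamma[D_\Gamma]$, summed over two-level graphs with $z_2$ on the lower level, applied to $\xi^{2g-1}\psi_2\cdots\psi_n$;
\item the vanishing of $\xi^{2g}$, since the area metric only sees the absolute periods.
\end{enumerate}
Since $\xi$ restricts to the top-level $\xi$ on $D_\Gamma$, the power $\xi^{2g-1}$ forces the top level to carry all of the genus. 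A dimension count then forces a single genus-zero bottom vertex containing $z_1$, $z_2$ and possibly further zeros $S$, attached by one edge to each of $k\le g$ top components. The result is not quadratic. It has the form
\[
v(\mu)=\sum_{S\supseteq\{1,2\}} w_S\, v(\mu_S)+R(\mu),
\]
where:
\begin{itemize}
\item $\mu_S$ is obtained from $\mu$ by merging the zeros in $S$;
\item the weights $w_S\ge 0$ (a factorial ratio times the prong $e_S+1$, the factors $m_j+1$, and a genus-zero $\psi$-integral) sum to exactly $1$;
\item $R(\mu)$ collects the $k\ge 2$ terms, which are products of between $2$ and $g$ volumes of lower genus.
\end{itemize}
In particular, no term compares $v(\mu)$ directly with $v(2g-2)$, except the single merge of all zeros.

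This is why your transfer step does not close as stated. The averaging structure gives $|v(\mu)-v(2g-2)|\le (n-1)\sup_{\mu'}|R(\mu')|$ after iterating through at most $n-1\le 2g-3$ merges. A correction of relative size $1/g$ per step, which is what your $\binom{2g}{2g_1}^{-1}$ heuristic asserts, therefore only yields $O(n/g)$. That does not tend to zero when $n$ is comparable to $g$, for instance for the principal stratum.

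What is needed is $\sup_\mu|R(\mu)|=o(1/g)$ (for example $O(g^{-2})$) uniformly in $\mu$. Proving this requires summing over all ways, exponentially many in $n$, of distributing the remaining zeros and the genus among up to $g$ top components. It also requires an a priori bound on lower-genus volumes whose powers $C^k$ are beaten by the decay of the weights in $k$, and that bound has to be established inside the same induction. You identify this as the main obstacle but offer only tentative strategies, so as written the proposal does not prove the theorem. Note that the averaging identity already supplies the sandwiching you hoped to obtain from a monotonicity property, once $R$ is controlled.

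Finally, the minimal-stratum limit should simply be quoted from \cite{S18}. Your singularity-analysis sketch does not account for the $(2g-2)!$ in $v(2g-2)=\frac{2(2\pi{\rm i})^{2g}}{(2g-2)!}\int\xi^{2g-1}$. It also does not justify that the dominant singularity stays at $t=\pm2\pi$ after a Lagrange-type inversion.
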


We remark that for the special cases of the principal and minimal strata, the large genus asymptotics of their Masur--Veech volumes were established earlier in \cite{CMZ18} and \cite{S18}, respectively. Furthermore, in \cite{S21}, Sauvaget proved the existence of a complete asymptotic expansion for these volumes, which depends only on the genus and the number of singularities. 

When a stratum is disconnected due to hyperelliptic or spin structures, the aforementioned ideas and methods can also be applied to study the Masur--Veech volume of each individual connected component; see, e.g.,~\cite{EOP08, CMSZ20}. It is worth noting that the spin components behave numerically similarly to connected strata in terms of large genus volume asymptotics, whereas the hyperelliptic components behave quite differently; see, e.g.,~\cite[p.~484]{EZ15}. Indeed, the Masur--Veech volumes and other geometric invariants of the hyperelliptic components can often be computed explicitly for all genera, due to the hyperelliptic double cover, which reduces the problem to studying the strata of quadratic differentials on the Riemann sphere; see the subsection below. 

\subsection{Quadratic differentials, $k$-differentials, cone metrics, and meromorphic differentials}
\label{subsec:k-diff}

In this section, we discuss several generalizations from holomorphic differentials to some other analogous structures. 

\subsubsection{$k$-differentials and $(1/k)$-translation surfaces} 

On a smooth complex curve $X$, a $k$-differential $\zeta$ is a section of the $k$-th power of the canonical bundle of $X$. Many properties of differential one-forms naturally generalize to $k$-differentials for higher $k$. For example, a $k$-differential $\zeta$ determines a $(1/k)$-translation structure, which allows translations and rotations by angles that are integer multiples of $2\pi / k$. Under the induced flat metric, a singularity of $\zeta$ of order $m$ has cone angle $2\pi (m+k)/k$. Note that as long as $m > -k$, this angle is well defined. Therefore, let $\nu$ be an integer partition of $k(2g-2)$ whose entries are all bigger than $-k$. Denote by $\mathcal H^k(\nu)$ the stratum of $k$-differentials parameterizing pairs $(X, \zeta)$ of type $\nu$, whose associated $(1/k)$-translation surfaces have finite area.

To relate back to differential one-forms, there exists a canonical cyclic cover $\pi\colon \widehat{X} \to X$ of degree $k$ such that $\pi^{*}\zeta = \omega^k$, where $\omega$ is a holomorphic one-form of type $\mu$ on $\widehat{X}$ determined by $\nu$, and is an eigenvector for a primitive $k$-th root of unity under the deck transformation of $\pi$. Therefore, up to the choice of a $k$-th root of unity, one can lift $\mathcal H^k(\nu)$ into the corresponding stratum of holomorphic one-forms $\mathcal H(\mu)$ as a subvariety, where the corresponding eigenspace of period coordinates in $\mathcal H(\mu)$ provides local coordinates for $\mathcal H^k(\nu)$. For instance, when $k=2$, these are the anti-invariant periods under the canonical double cover. We refer to \cite{BCGGMk} for a detailed introduction to the strata of $k$-differentials and their compactifications. 

We remark that the classification of connected components of the strata of $k$-differentials remains largely open for $k \geq 3$, due to the absence of the ${\rm GL}_2^{+}(\mathbb R)$-action and the associated vertical flow decomposition for $(1/k)$-translation surfaces, in contrast to the cases $k=1,2$; see~\cite{CG22} for partial results and related discussions. 

\subsubsection{Masur--Veech volumes of strata of quadratic differentials} 

As mentioned previously, the ${\rm GL}_2^{+}(\mathbb R)$-action is well defined on the strata of $k$-differentials only for $k=1,2$. Therefore, aside from the case of one-forms, the only other instance is that of quadratic differentials, whose strata we denote by $\mathcal Q(\nu)$. As in the case of one-forms, the strata of quadratic differentials may also be disconnected; see \cite{L08, CM14, CG22} for the classifications of their connected components.  

As explained above, we allow quadratic differentials to have simple poles, which correspond to cone angles equal to $\pi$, and the resulting half-translation surfaces still have finite area. Since the involution induced by the canonical double cover preserves the lattice $\mathbb Z \oplus \mathbb Z{\rm i} \subset \mathbb C$, the anti-invariant part of the relative cohomology $H^1_{-}(\widehat{X}, \{ \widehat{z}_i \}; \mathbb Z \oplus \mathbb Z{\rm i})$ spans a lattice in $H^1_{-}(\widehat{X}, \{ \widehat{z}_i \}; \mathbb C)$. We can normalize this lattice to have covolume one, thereby defining a Masur--Veech volume form on $\mathcal Q(\nu)$. When restricting to the hyperboloid $\mathcal Q(\nu)^{\le \frac{1}{2}}$ of half-translation surfaces with area bounded by $1/2$, the finiteness theorem of Masur and Veech, as reviewed in Theorem~\ref{thm:finite}, continues to hold, thus defining a finite Masur--Veech volume for the stratum $\mathcal Q(\nu)$. 

Similar to square-tiled surfaces, integral points in the strata of quadratic differentials $\mathcal Q(\nu)$ correspond to {\em pillowcase covers}, which are covers of $\mathbb C\mathbb P^1$ branched over four points. For instance, a quadratic differential with four simple poles on $\mathbb C\mathbb P^1$ can be obtained by folding a $2\times 1$ rectangle along its midline so that the left and right squares are identified, forming a pillowcase. In parallel with their result for Abelian differentials, Eskin and Okounkov~\cite{EO06} showed that the generating function of pillowcase covers is also quasimodular, providing a theoretical algorithm for computing the volume of $\mathcal Q(\nu)$. Following this line of work, Goujard~\cite{Gou16} computed explicit values for the Masur--Veech volumes of strata of quadratic differentials up to dimension~$10$. An explicit formula for the volumes of strata in genus~$0$, conjectured earlier by Kontsevich, was later proved by Athreya, Eskin, and Zorich~\cite[Theorem 1.1]{AEZ16} using the values of area Siegel--Veech constants in genus~$0$ from the work of Eskin, Kontsevich, and Zorich~\cite[Theorem 3]{EKZ14}, which relates these invariants to sums of Lyapunov exponents with respect to the Teichm\"uller geodesic flow. 

Despite these advances, the asymptotic analysis of pillowcase covers is considerably more involved than that of square-tiled surfaces. Consequently, although a similar intersection-theoretic volume formula is expected to hold for all strata of quadratic differentials (see~\cite[Conjecture 1.1]{CMS23}), the indirect proof of Theorem~\ref{thm:abelian-volume-int} has not yet been fully developed in this setting. Only in the special case where all periods are absolute—that is, when all singularity orders of the quadratic differential are odd—the area Hermitian metric approach remains applicable, as shown by M\"oller, Sauvaget, and the first-named author  in~\cite[Theorem~1.2]{CMS23}. 

Let $\mathcal Q(1^{4g-4+n}, -1^n)$ be the principal stratum consisting of quadratic differentials with simple zeros and poles, occupies a distinguished position among all strata of quadratic differentials, as it can be identified with an open subset of the cotangent bundle of $\mathcal M_{g,n}$. For this reason, several different approaches have been developed to compute the Masur–Veech volume of $\mathcal Q(1^{4g-4+n}, -1^n)$ and to study its asymptotic behavior for large $g$ and $n$. From the viewpoint of topological recursion, see \cite{the7} and \cite[Appendix]{CMS23}; for an expression in terms of $\psi$-class intersection numbers, which also appear in the computation of Weil–Petersson volumes, see \cite{DGZZ21}; and for the volume asymptotics, see \cite{CMS23, YZZ20, K22, A21}. In particular, Aggarwal proved in \cite[Theorem~1.7]{A21} the large genus limits for the volumes of $\mathcal Q(1^{4g-4+n}, -1^n)$ as originally conjectured in \cite{ADGZZ20}. 

It is worth noting that \cite{ADGZZ20} provides conjectural descriptions for the large genus limits of the Masur--Veech volumes of other strata of quadratic differentials, which remain largely open problems. Even for the strata of quadratic differentials with only odd singularities, the intersection-theoretic formula for their volumes in \cite[Theorem~1.2]{CMS23} cannot be evaluated recursively, since along the boundary some odd singularities may merge to form even ones. Recently, Duryev, Goujard, and Yakovlev \cite{DGY25} expressed the volumes of certain completed strata of quadratic differentials with only odd singularities as a sum over stable graphs, by counting ribbon graphs appearing in Kontsevich’s proof of Witten’s conjecture. These completed strata include some adjacent strata beyond the purely odd ones, which still need to be removed. 

We summarize some of the open problems related to the Masur--Veech volumes of strata of quadratic differentials as follows. 

\begin{problem}
 Prove the conjectural formula proposed in~\cite[Conjecture~1.1]{CMS23} for computing the Masur--Veech volumes of the strata of quadratic differentials via intersection theory. 
 
 Furthermore, derive recursion relations for the asymptotics of pillowcase covers and the associated intersection numbers, and compare their structures. 
 
 In addition, establish the large genus asymptotics of the Masur--Veech volumes of strata of quadratic differentials as predicted in~\cite[Conjecture~1]{ADGZZ20}. 
\end{problem}

\subsubsection{Volumes of strata of $k$-differentials} 

Now, consider the strata of $k$-differentials for $k \geq 3$. In this case, the ${\rm GL}_2^{+}(\mathbb{R})$-action does not commute with rotations by an angle $2\pi/k$. Moreover, the $k$-th root Abelian differential on the canonical $k$-cover, as well as the corresponding period coordinates of $\mathcal{H}^k(\mu)$, depend on the choice of a primitive $k$-th root of unity. Hence, even defining the Masur--Veech volume form for $\mathcal{H}^k(\mu)$ requires additional work. Nguyen proved that $\mathcal{H}^k(\mu)$ carries a volume form inducing a finite total volume on $\mathbb{P}\mathcal{H}^k(\mu)$, defined with respect to a choice of a primitive $k$-th root of unity $\zeta_k$; see~\cite[Theorem~1.1]{N22}. He further showed how this volume form changes when the choice of $\zeta_k$ varies, and in particular, that it is independent of the choice for $k = 3, 4, 6$; see~\cite[Remark 5.5]{N22}. 

Alternatively, for the cases $k = 3, 4, 6$, Engel~\cite{E21} studied elliptic orbifold covers corresponding to certain hexagon, square, and triangle tilings. Similar to the cases $k = 1, 2$, the large degree  asymptotics of the associated Hurwitz numbers provide an algorithm to compute the Masur--Veech volumes of the strata of cubic, quartic, and sextic differentials. We refer to~\cite[Section~5.2]{N22} for a comparison of these volume forms and to~\cite{KN21} for a related work. 

\subsubsection{Volumes of moduli spaces of flat surfaces with cone metrics} 

This story is particularly rich for flat cone metrics on the sphere of genus zero, where a cone metric is locally isometric to an open subset of a flat cone, e.g., triangulations of the sphere by Euclidean triangles. Note that, in general, a surface equipped with a cone metric does not necessarily correspond to a differential form, since the transition functions need not be translations and rotations by angles that are rational multiples of $2\pi$. Nevertheless, computing the volumes of the associated moduli spaces remains an active topic of research, whose study also connects to the hyperbolic geometric aspect of Weil--Petersson volumes; see, e.g.,~\cite{T98, M17, ES18, KN18, N24}; see also~\cite{KT99} for a related discussion from the perspective of the moduli space of spatial polygons. 

For general genus $g$, let $\mathcal M(a)$ denote the moduli space of cone metrics whose cone angles (modulo $2\pi$) are specified by
$a = (a_1,\ldots, a_n)$ with $a_i \in \mathbb R^{+}$ and $\sum_{i=1}^n a_i = 2g-2+n$. Each $\mathcal M(a)$ is diffeomorphic to $\mathcal M_{g,n}$, as shown by Troyanov \cite{T86}. Veech \cite{V93} introduced systems of local coordinates and constructed a measure on $\mathcal M(a)$. As mentioned in \S\ref{subsec:higherangles}, in a recent work, Sauvaget \cite{S24} discovered a rational piecewise polynomial which, up to an explicit normalization factor, determines the volume of $\mathcal M(a)$ for all rational vectors $a$ without integral coordinates. Moreover, the normalized volume formula extends continuously and is conjectured to compute the volume of $\mathcal M(a)$ for all $a$. 

A key insight in Sauvaget’s approach lies in explicit recursions for the Masur--Veech volumes of the strata of $k$-differentials with no integral singularities, together with the study of their volume asymptotics as $k \to \infty$. In this regime, the metrics induced by $k$-differentials behave like rational points in the moduli space of all cone metrics, and the large-$k$ asymptotics are analogous to approximating real numbers by rational numbers with large denominators. As remarked by Sauvaget, to fully resolve his conjecture, a compactification of the moduli space $\mathcal M(a)$---analogous to the multi-scale compactification of the strata of differentials in \cite{BCGGM1, BCGGMk, BCGGM2, CMZ24}---would play a crucial role; see \cite{GP17, GP20} for related discussions in genus one; however, such a construction is still missing in the literature for general $g$. 

\subsubsection{A quest for volumes of strata of meromorphic differentials} 

Finally, consider the strata of meromorphic differentials, which correspond to translation surfaces of {\em infinite} area. In this case, there is no hyperboloid of meromorphic differentials with bounded area inside the strata, and hence it remains unclear how to define the Masur--Veech volume form. In contrast, the multi-scale compactification can still be constructed for the strata of meromorphic differentials, and all tautological divisor classes, together with their intersection numbers, are well defined. 
In particular, Sauvaget \cite{S25} defined virtual volumes for strata of meromorphic differentials with only simple poles using the intersection numbers in \eqref{eq:MV-abelian}, and showed that these virtual volumes satisfy properties analogous to those in the holomorphic case. In general, it would be meaningful to explore flat-geometric interpretations of these intersection numbers for strata of meromorphic differentials with arbitrary pole orders---for instance, to determine which of them might correspond to volume-like invariants.  

\medskip

We summarize the open problems mentioned above as follows. 

\begin{problem}
For strata of $k$-differentials and cone metrics in general, prove the conjectural intersection-theoretic formula proposed in~\cite[Conjecture~1.6]{S24} for computing their Masur--Veech volumes, and analyze their large genus asymptotics. 

Furthermore, for strata of meromorphic differentials, define a meaningful volume form and identify flat-geometric interpretations of the intersection numbers arising from tautological divisor classes, analogous to those appearing in the volume–intersection formulae in the case of holomorphic differentials.
\end{problem}

\subsection{Linear subvarieties and Siegel--Veech constants}
\label{subsec:linear-SV}

A breakthrough result of Eskin, Mirzakhani, and Mohammadi establishes that every ${\rm GL}_2^{+}(\mathbb R)$-orbit closure in the strata of holomorphic differentials is locally defined by linear equations in the period coordinates, whose coefficients were later shown by Filip to be algebraic numbers; see \cite{EMM15, EM18, F16}. Hence, these orbit closures are also called {\em linear subvarieties}. 

\subsubsection{Masur--Veech volumes of linear subvarieties} 

Whenever a “unit cube” element is well defined in the subspace of period coordinates of a linear subvariety—for instance, when the defining linear equations are defined over $\mathbb Q$—we can define its Masur--Veech volume as before. Such examples include arithmetic Teichm\"uller curves generated by square-tiled surfaces, the strata of quadratic differentials lifted to the corresponding strata of Abelian differentials via the canonical double cover, and several recently discovered totally geodesic subvarieties; see~\cite{MMW17, EMMW20}. 

For an arithmetic Teichm\"uller curve, its volume can be determined by the number of square-tiled surfaces lying in the same ${\rm GL}_2^{+}(\mathbb R)$-orbit, which corresponds to the Hurwitz number of branched covers of the square torus with a single branch point and a prescribed ramification profile. A subtlety here is that the classification of connected components of the relevant Hurwitz space remains largely open, even in genus two. In other words, a given Hurwitz number may correspond to the total volume of several distinct arithmetic Teichm\"uller curves. We refer to~\cite{D24} for a comprehensive discussion of the connected components of arithmetic Teichm\"uller curves in genus two. As for the gothic locus, a totally geodesic subvariety discovered in~\cite{MMW17}, its Masur--Veech volume was computed in~\cite{TT20}, building on the formulae in~\cite{MTT20} for the Euler characteristics of Teichm\"uller curves contained in the gothic locus. 

If a linear subvariety is locally generated by absolute periods only, it is called {\em REL zero}. Such examples include the minimal strata $\mathcal H(2g-2)$, the strata of quadratic differentials with odd singularities, and the gothic locus. In this case, as explained in Section~\ref{subsec:intersection},
the Hermitian metric on the tautological bundle $\mathcal O(-1)$ induced by the area form defines a volume form, so that the top self-intersection number of $\xi = c_1(\mathcal O(1))$ on the compactified linear subvariety computes the corresponding volume. This approach for arbitrary REL-zero linear subvarieties was justified by Nguyen~\cite{N25}. For a Teichm\"uller curve, the degree of $\xi$ corresponds to its Euler characteristic; see~\cite[Formula~(15)]{CM12}. For the gothic locus, the top self-intersection number of $\xi$ was computed in~\cite{C22}, which also determines its Masur--Veech volume up to a normalization factor. 

\begin{problem}
Formulate a notion of Masur--Veech volumes for REL-nonzero linear subvarieties in the strata of holomorphic differentials, and relate it to the type of intersection numbers appearing in \eqref{eq:MV-abelian}.   
\end{problem}

\subsubsection{Counting geodesics on flat surfaces} 

Another family of invariants that coexist with the Masur--Veech volumes are the {\em Siegel--Veech constants}. Roughly speaking, they describe the quadratic growth rate, as the length tends to infinity, of the number of certain geodesic-related structures on a flat surface. For example, the number $N(L)$ of saddle connections joining two specified zeros and of length less than $L$ grows quadratically in $L$: 
\begin{align*}
N(L) \sim c L^2 
\end{align*}
with the leading coefficient---up to a normalization factor---defining the Siegel--Veech constant for such saddle connections. Besides saddle connections, one can also count closed geodesics of length less than $L$, or cylinders filled with closed geodesics, weighted by the ratio of their height to width. The phenomenon of quadratic growth can be easily illustrated in the simple case of counting line segments joining an integer lattice point to the origin and of length less than $L$, which corresponds to counting saddle connections of bounded length joining a marked point to itself on the unit-square torus. In general, this quadratic asymptotic behavior was systematically studied by Eskin and Masur~\cite{EM01}.

The computation of Siegel--Veech constants, much like that of the Masur--Veech volumes, is equally profound and intricate. To avoid overloading the present exposition, we briefly summarize the relationship between the two as follows. By shrinking the structures counted in the definition of the Siegel--Veech constants---for instance, by letting the zeros joined by a saddle connection coalesce, or by pinching the core curve of a cylinder---one approaches certain boundary strata of the moduli space of differentials. The volumes of these boundary neighborhoods, divided by the total volume of the moduli space and multiplied by combinatorial factors arising from the counting problem, yield recursive formulas for the Siegel--Veech constants in terms of the Masur--Veech volumes. This approach was explained in detail in the work of Eskin, Masur, and Zorich~\cite{EMM03}, where the corresponding boundary strata are referred to as {\em principal boundaries}. It has since found wide applications in the computation of Siegel--Veech constants for Abelian and quadratic differentials, as well as in the study of the geometry of compactifications of the corresponding moduli spaces; see~\cite{BG15, Gou15, AEZ16, CC19P, CGM25}. 

\subsubsection{Area Siegel--Veech constants as intersection numbers} 

Among all Siegel--Veech constants, the {\em area} Siegel--Veech constant $c_{\rm area}$---which counts cylinders of bounded length weighted by their modulus (i.e., height over width)---is particularly significant from the perspective of intersection theory. Intuitively, when one pinches the core curve of a cylinder to form a node, the local type of the resulting node is encoded by the cylinder modulus, in a manner analogous to counting intersection numbers with the appropriate multiplicities. 

A distinguished aspect of the area Siegel--Veech constant is the remarkable formula of Eskin, Kontsevich, and Zorich \cite[Theorem 1]{EKZ14} that relates the area Siegel--Veech constant $c_{\rm area}$ and the sum of Lyapunov exponents $L$ for any linear subvariety $\mathcal M$ in the stratum $\mathcal H(\mu)$ of Abelian differentials of type $\mu$:  
\begin{align*}
L (\mathcal M) = \frac{\pi^2}{3}\cdot c_{\rm area} (\mathcal M) + 
\kappa_{\mu}, 
\end{align*}
where $\kappa_\mu = \frac{1}{12} \sum_{i=1}^n \frac{m_i (m_i+2)}{m_i+1}$ is a constant determined by the zero type $\mu = (m_1,\ldots, m_n)$. The incarnation of this formula in algebraic geometry is Mumford's relation (also called the Noether formula): 
\begin{align*}
\lambda = \frac{\delta + \kappa}{12}, 
\end{align*}
where $\lambda$ is the first Chern class of the Hodge bundle over $\overline{\mathcal M}_g$, $\delta$ is the boundary divisor class, and $\kappa$ is the first Miller--Morita--Mumford class; see~\cite[p. 102]{M77}. 

The correspondence between these two formulae can be understood intuitively as follows. In special cases where the Hodge bundle splits into invariant subbundles under the Teichm\"uller geodesic flow, each invariant subbundle contributes to the corresponding Lyapunov exponents, and hence their sum corresponds to the first Chern class of the entire Hodge bundle. Moreover, pinching the core curve of each cylinder produces a nodal surface parameterized by the boundary of $\overline{\mathcal M}_g$. Finally, the constant $\kappa_\mu$ naturally arises from tautological divisor class relations on the moduli space of differentials of type~$\mu$. This correspondence can be justified rigorously in the case of Teichm\"uller curves, which also connects to the study of their slopes; see~\cite{CM12, CM14}. Therefore, it would be meaningful to establish an intersection-theoretic formula capable of computing the area Siegel--Veech constant for all linear subvarieties.  

Indeed, such a formula was speculated in \cite[Main Theorem]{K97} for the strata of holomorphic differentials and conjectured explicitly for all linear subvarieties in~\cite[Conjecture~4.3]{CMS23} as follows: 
\begin{align}
\label{eq:c_area-conjecture}
c_{\rm area}(\mathcal M) = -\frac{1}{4\pi^2} \frac{\int_{\mathbb P\overline{\mathcal M}}\xi^{a-2}\psi_1\cdots \psi_r \delta}{\int_{\mathbb P\overline{\mathcal M}}\xi^{a-1}\psi_1\cdots \psi_r}, 
\end{align}
where $a$ is the rank of the absolute period space of $\mathcal M$, $r$ is the rank of the (strictly) relative period space, $\delta$ denotes the boundary divisor of the multi-scale compactification $\mathbb P\overline{\mathcal M}$, $\xi$ is the universal line bundle class, and $\psi_1, \ldots, \psi_r$ are the $\psi$-classes associated with the marked zeros whose variations generate the relative period space of~$\mathcal M$. Note that the denominator in the conjectural formula has the same form as the intersection-theoretic expression computing the Masur--Veech volume in~\eqref{eq:MV-abelian}. This is natural, since the numerator measures the total contribution obtained by pinching the core curves of cylinders to the boundary intersection, and the ratio with the total volume of the linear subvariety yields the averaged quantity. Moreover, the area Siegel--Veech constant is intrinsically defined for~$\mathcal M$, independent of the choice of its volume normalization, and so is the expression in~\eqref{eq:c_area-conjecture}, where any such normalization cancels out between the numerator and the denominator. 

Besides Teichm\"uller curves, the conjectural formula~\eqref{eq:c_area-conjecture} is also known to hold for the strata of Abelian differentials~\cite{S18,CMSZ20, IS25}, the principal strata of quadratic differentials~\cite{CMS23}, the gothic locus~\cite{C22}, and all linear subvarieties of REL zero~\cite{CGM25}, which include in particular the strata of quadratic differentials with odd singularities. However, when a linear subvariety admits a free relative period, the conjectural formula remains open in general. 

It is worth noting that the curve class $\xi^{a-2}\psi_1\cdots \psi_r$ in \eqref{eq:c_area-conjecture} is expected to represent the differential form $\beta$ in Kontsevich's speculation, where it remains unknown in general whether $\beta$ is a rational cohomology class. On the other hand, $\beta$ appears to resemble the ``limit'' of the numerical classes of arithmetic Teichm\"uller curves of large degree, normalized in a suitable way. For instance, if the rational Picard group of $\mathbb P\overline{\mathcal M}$ were known (which, however, remains largely mysterious even for the strata of differentials), then comparing the intersection numbers of Teichm\"uller curves and the $\beta$ class with interior and boundary divisor classes could yield a proportionality relation between them. 

\medskip

We summarize these questions as follows. 

\begin{problem}
Study the principal boundary of a REL-nonzero linear subvariety and develop a recursion for its area Siegel--Veech constant in terms of volumes and intersection numbers. 

Alternatively, provide a justification of Kontsevich’s $\beta$ class via the intersection of divisor classes and the large degree  limit of Teichm\"uller curves. 

In either approach, establish the conjectural formula~\eqref{eq:c_area-conjecture} in full generality. 
\end{problem}

\subsubsection{Variants of Siegel--Veech constants} 

Finally, one can consider Siegel--Veech constants weighted by the area of cylinders to a different power as well as how these constants behave under covering constructions; see~\cite{ACMSS}. One can also inquire about Siegel--Veech-type invariants for flat surfaces other than translation surfaces. For $(1/k)$-translation surfaces (i.e., $k$-differentials) with $k \geq 3$, due to the absence of a ${\rm GL}_2^+(\mathbb R)$-action, only certain weak quadratic asymptotics in the sense of Ces\`aro means were established in~\cite{A25} for the cylinder and saddle-connection Siegel--Veech constants. For cone spheres whose cone angles are irrational multiples of~$\pi$, a generalization of the classical Siegel--Veech formula was developed in~\cite{F25} for saddle connections and closed geodesics. In the case of meromorphic differentials with poles (i.e., translation surfaces of infinite area), although the ${\rm GL}_2^+(\mathbb R)$-action is well defined on the strata of meromorphic differentials, it is generally not ergodic; see~\cite[Appendix~A]{B15}. In contrast, the strata of meromorphic differentials admit wall–chamber decompositions, where the counting of cylinders and saddle connections is expected to satisfy the wall-crossing formula of Kontsevich and Soibelman  \cite{KS08}, whose study is closely related to Bridgeland stability conditions, BPS invariants, and Donaldson--Thomas theory. As an incomplete overview of this rapidly developing topic, we refer the reader to~\cite{GMN13, BS15, HKK17, A23, KW24, HM25} for several representative results among many others. In higher dimensions, K3 surfaces generalize flat tori and also admit interesting geometric invariants that behave analogously to Siegel--Veech constants and Lyapunov exponents for flat surfaces; see \cite{F18, F20, AFL23}. 

We summarize the phenomena discussed above into the following relatively vague but long-term meaningful question.

\begin{problem}
    Compare the structures of Siegel--Veech-type invariant counting for holomorphic differentials, meromorphic differentials, $k$-differentials, and surfaces equipped with cone metrics, as well as their higher-dimensional analogs. 
    
    Investigate their intrinsic connections, and explore whether they exhibit structural analogies with other classical enumerative theories, such as Gromov--Witten theory, Donaldson--Thomas theory, and mirror symmetry. 
\end{problem}

\section{Common themes of both volumes}
\label{sec:common}

 We conclude this survey by summarizing several features that appear in the study of both Weil--Petersson and Masur--Veech volumes. 

First, the inputs of metrics and cone angles as well as the volume expressions and counting geodesics as intersection numbers clearly present strong similarities between the two volumes. In particular, the Weil--Petersson metric represented by the class $\kappa_1$
and the hermitian metric associated to the area of flat surfaces represented by the class $\xi$ reveal an intrinsic geometric idea commonly utilized in the computation of volumes through intersection numbers on both sides.  

Additionally, $\psi$-classes also appear in the computation of both volumes. On hyperbolic surfaces, as an $S^1$-bundle, each $\psi$-class can be interpreted as the hyperbolic geodesic boundary around the corresponding cusp. On flat surfaces, heuristically speaking (see Problem~\ref{prob:psi}), each $\psi$-class keeps track of the variation of the corresponding cone point. 

A closely related context for both volumes is Kontsevich's proof of Witten's conjecture (see \cite{Kont92}), where $\psi$-classes appear as polygonal boundaries surrounding the double poles of Strebel differentials (a special kind of quadratic differentials) under the associated flat metric; see also \cite{Mo11} for a description of the interpolation of the hyperbolic and flat perspectives that produce ribbon graphs as a key structure in Kontsevich's proof of Witten's conjecture. 

Furthermore, another proof of Witten's conjecture given by Pandharipande and Okounkov (see \cite{OP09}) crucially used Hurwitz numbers of branched covers of $\mathbb P^1$, while Hurwitz numbers of branch covers of tori count lattice points in the strata of holomorphic differentials whose large degree asymptotics determine the Masur--Veech volume.

Finally, as already mentioned, Sauvaget~\cite{Sauv} has recently provided an innovative way forward on the open question of the computation of Weil--Petersson volumes when a cone angle is greater than $2\pi$, proposing a definition of the volume of these moduli spaces as a limit of the Masur--Veech volumes of moduli spaces of $k$-differentials as $k\to \infty$. 
 
Recall that a $k$-differential on a Riemann surface with zeros or poles of order $>-k$ gives a flat metric of finite area on the surface with conical singularities of angle $2\pi(\frac{m}{k}+1)$ at a zero or pole of order $m>-k$.  Heuristically, by fixing the cone angles at $n$ points and considering $k$-differentials with simple zeros outside these points, the simple zeros form a small punctual negative curvature, which as $k\to \infty$  equidistribute to approximate a smooth metric of constant negative curvature. This heuristic convergence has already been used in theoretical physics where the Weil--Petersson measure is used to compute the partition functions in Jackiw--Teitelboim (JT) theory, a gravity theory with dilaton.

Together with the conjectured Masur--Veech volumes of the moduli spaces of $k$-differentials given recently in \cite[Conjecture 1.6]{S24}, 
this provides a conjectural description of the volume function more generally which Sauvaget shows corresponds with the Weil--Petersson volume for $\mathbf{L}\in i[0,2\pi)^n$ in the case that $-iL_j=\theta_j<\pi$ for at least $n-1$ angles.

\bibliographystyle{alpha}
\bibliography{biblio}

\newcommand{\etalchar}[1]{$^{#1}$}
\begin{thebibliography}{EMMW20}

\bibitem[ABC{\etalchar{+}}23]{the7}
J\o rgen~Ellegaard Andersen, Ga\"etan Borot, S\'everin Charbonnier, Vincent
  Delecroix, Alessandro Giacchetto, Danilo Lewa\'nski, and Campbell Wheeler.
\newblock Topological recursion for {M}asur-{V}eech volumes.
\newblock {\em J. Lond. Math. Soc. (2)}, 107(1):254--332, 2023.

\bibitem[AC96]{ACoCom}
Enrico Arbarello and Maurizio Cornalba.
\newblock Combinatorial and algebro-geometric cohomology classes on the moduli
  spaces of curves.
\newblock {\em J. Algebraic Geom.}, 5(4):705--749, 1996.

\bibitem[ACM{\etalchar{+}}26]{ACMSS}
David Aulicino, Aaron Calderon, Carlos Matheus, Nick Salter, and Martin
  Schmoll.
\newblock Siegel--{V}eech constants for cyclic covers of generic translation
  surfaces.
\newblock {\em J. Lond. Math. Soc. (2)}, 113(1):Paper No. e70413, 2026.

\bibitem[ADG{\etalchar{+}}20]{ADGZZ20}
Amol Aggarwal, Vincent Delecroix, \'Elise Goujard, Peter Zograf, and Anton
  Zorich.
\newblock Conjectural large genus asymptotics of {M}asur-{V}eech volumes and of
  area {S}iegel-{V}eech constants of strata of quadratic differentials.
\newblock {\em Arnold Math. J.}, 6(2):149--161, 2020.

\bibitem[AEZ16]{AEZ16}
Jayadev~S. Athreya, Alex Eskin, and Anton Zorich.
\newblock Right-angled billiards and volumes of moduli spaces of quadratic
  differentials on {$\mathbb C\rm P^1$}.
\newblock {\em Ann. Sci. \'Ec. Norm. Sup\'er. (4)}, 49(6):1311--1386, 2016.
\newblock With an appendix by Jon Chaika.

\bibitem[AFL23]{AFL23}
Jayadev~S. Athreya, Yu-Wei Fan, and Heather Lee.
\newblock Counting special {L}agrangian classes and semistable {M}ukai vectors
  for {K}3 surfaces.
\newblock {\em Geom. Dedicata}, 217(5):Paper No. 89, 21, 2023.

\bibitem[Agg20]{A20}
Amol Aggarwal.
\newblock Large genus asymptotics for volumes of strata of {A}belian
  differentials.
\newblock {\em J. Amer. Math. Soc.}, 33(4):941--989, 2020.
\newblock With an appendix by Anton Zorich.

\bibitem[Agg21]{A21}
Amol Aggarwal.
\newblock Large genus asymptotics for intersection numbers and principal strata
  volumes of quadratic differentials.
\newblock {\em Invent. Math.}, 226(3):897--1010, 2021.

\bibitem[All23]{A23}
Dylan G.~L. Allegretti.
\newblock On the wall-crossing formula for quadratic differentials.
\newblock {\em Int. Math. Res. Not. IMRN}, (9):8033--8077, 2023.

\bibitem[AM22]{AnMonk}
Nalini Anantharaman and Laura Monk.
\newblock A high-genus asymptotic expansion of {W}eil-{P}etersson volume
  polynomials.
\newblock {\em J. Math. Phys.}, 63(4):Paper No. 043502, 26, 2022.

\bibitem[AMN23]{AMN}
Lukas Anagnostou, Scott Mullane, and Paul Norbury.
\newblock Weil-{P}etersson volumes, stability conditions and wall-crossing,
  2023.

\bibitem[AN25]{ANoVol}
Lukas Anagnostou and Paul Norbury.
\newblock Volumes of moduli spaces of hyperbolic surfaces with cone points,
  2025.

\bibitem[Ayg25]{A25}
Juliet Aygun.
\newblock Counting geodesics on prime-order $k$-differentials, 2025.

\bibitem[BB24]{BB24}
Luca Battistella and Sebastian Bozlee.
\newblock Hyperelliptic {G}orenstein curves and logarithmic differentials.
\newblock {\em Forum Math. Sigma}, 12:Paper No. e127, 28, 2024.

\bibitem[BCG{\etalchar{+}}18]{BCGGM1}
Matt Bainbridge, Dawei Chen, Quentin Gendron, Samuel Grushevsky, and Martin
  M\"oller.
\newblock Compactification of strata of {A}belian differentials.
\newblock {\em Duke Math. J.}, 167(12):2347--2416, 2018.

\bibitem[BCG{\etalchar{+}}19]{BCGGMk}
Matt Bainbridge, Dawei Chen, Quentin Gendron, Samuel Grushevsky, and Martin
  M\"oller.
\newblock Strata of {$k$}-differentials.
\newblock {\em Algebr. Geom.}, 6(2):196--233, 2019.

\bibitem[BCG{\etalchar{+}}24]{BCGGM2}
Matt Bainbridge, Dawei Chen, Quentin Gendron, Samuel Grushevsky, and Martin
  Möller.
\newblock The moduli space of multi-scale differentials, 2024.

\bibitem[BDG22]{BDG22}
Frederik Benirschke, Benjamin Dozier, and Samuel Grushevsky.
\newblock Equations of linear subvarieties of strata of differentials.
\newblock {\em Geom. Topol.}, 26(6):2773--2830, 2022.

\bibitem[Ben23]{B23}
Frederik Benirschke.
\newblock The boundary of linear subvarieties.
\newblock {\em J. Eur. Math. Soc. (JEMS)}, 25(11):4521--4582, 2023.

\bibitem[Ber74]{BerSpa}
Lipman Bers.
\newblock Spaces of degenerating {R}iemann surfaces.
\newblock In {\em Discontinuous groups and {R}iemann surfaces ({P}roc. {C}onf.,
  {U}niv. {M}aryland, {C}ollege {P}ark, {M}d., 1973)}, volume No. 79 of {\em
  Ann. of Math. Stud.}, pages 43--55. Princeton Univ. Press, Princeton, NJ,
  1974.

\bibitem[BG15]{BG15}
Max Bauer and Elise Goujard.
\newblock Geometry of periodic regions on flat surfaces and associated
  {S}iegel-{V}eech constants.
\newblock {\em Geom. Dedicata}, 174:203--233, 2015.

\bibitem[Boi15]{B15}
Corentin Boissy.
\newblock Connected components of the strata of the moduli space of meromorphic
  differentials.
\newblock {\em Comment. Math. Helv.}, 90(2):255--286, 2015.

\bibitem[BS15]{BS15}
Tom Bridgeland and Ivan Smith.
\newblock Quadratic differentials as stability conditions.
\newblock {\em Publ. Math. Inst. Hautes \'Etudes Sci.}, 121:155--278, 2015.

\bibitem[CC19a]{CC19P}
Dawei Chen and Qile Chen.
\newblock Principal boundary of moduli spaces of abelian and quadratic
  differentials.
\newblock {\em Ann. Inst. Fourier (Grenoble)}, 69(1):81--118, 2019.

\bibitem[CC19b]{CC19}
Dawei Chen and Qile Chen.
\newblock Spin and hyperelliptic structures of log twisted differentials.
\newblock {\em Selecta Math. (N.S.)}, 25(2):Paper No. 20, 42, 2019.

\bibitem[CC24]{CC24}
Dawei Chen and Qile Chen.
\newblock Gorenstein contractions of multiscale differentials, 2024.

\bibitem[CG22]{CG22}
Dawei Chen and Quentin Gendron.
\newblock Towards a classification of connected components of the strata of
  {$k$}-differentials.
\newblock {\em Doc. Math.}, 27:1031--1100, 2022.

\bibitem[CGH{\etalchar{+}}22]{CGHMS}
Dawei Chen, Samuel Grushevsky, David Holmes, Martin Möller, and Johannes
  Schmitt.
\newblock A tale of two moduli spaces: logarithmic and multi-scale
  differentials, 2022.

\bibitem[CGM25]{CGM25}
Dawei Chen, Elise Goujard, and Martin Möller.
\newblock Area siegel--veech constants for affine invariant submanifolds of rel
  zero, 2025.

\bibitem[Che17]{C17}
Dawei Chen.
\newblock Degenerations of {A}belian differentials.
\newblock {\em J. Differential Geom.}, 107(3):395--453, 2017.

\bibitem[Che19]{C19}
Dawei Chen.
\newblock Tautological ring of strata of differentials.
\newblock {\em Manuscripta Math.}, 158(3-4):345--351, 2019.

\bibitem[Che22]{C22}
Dawei Chen.
\newblock Dynamical invariants and intersection theory on the flex and gothic
  loci.
\newblock {\em Eur. J. Math.}, 8:S42--S52, 2022.

\bibitem[Che24]{chek}
Leonid~O. Chekhov.
\newblock Fool's crowns, trumpets, and schwarzian, 2024.

\bibitem[CM12]{CM12}
Dawei Chen and Martin M\"oller.
\newblock Nonvarying sums of {L}yapunov exponents of {A}belian differentials in
  low genus.
\newblock {\em Geom. Topol.}, 16(4):2427--2479, 2012.

\bibitem[CM14]{CM14}
Dawei Chen and Martin M\"oller.
\newblock Quadratic differentials in low genus: exceptional and non-varying
  strata.
\newblock {\em Ann. Sci. \'Ec. Norm. Sup\'er. (4)}, 47(2):309--369, 2014.

\bibitem[CMS23]{CMS23}
Dawei Chen, Martin M\"oller, and Adrien Sauvaget.
\newblock Masur-{V}eech volumes and intersection theory: the principal strata
  of quadratic differentials.
\newblock {\em Duke Math. J.}, 172(9):1735--1779, 2023.

\bibitem[CMSZ20]{CMSZ20}
Dawei Chen, Martin M\"oller, Adrien Sauvaget, and Don Zagier.
\newblock Masur-{V}eech volumes and intersection theory on moduli spaces of
  {A}belian differentials.
\newblock {\em Invent. Math.}, 222(1):283--373, 2020.

\bibitem[CMZ18]{CMZ18}
Dawei Chen, Martin M\"oller, and Don Zagier.
\newblock Quasimodularity and large genus limits of {S}iegel-{V}eech constants.
\newblock {\em J. Amer. Math. Soc.}, 31(4):1059--1163, 2018.

\bibitem[CMZ24]{CMZ24}
Matteo Costantini, Martin M\"oller, and Jonathan Zachhuber.
\newblock The area is a good enough metric.
\newblock {\em Ann. Inst. Fourier (Grenoble)}, 74(3):1017--1059, 2024.

\bibitem[CW21]{CW21}
Dawei Chen and Alex Wright.
\newblock The {WYSIWYG} compactification.
\newblock {\em J. Lond. Math. Soc. (2)}, 103(2):490--515, 2021.

\bibitem[DGY25]{DGY25}
Eduard Duryev, Elise Goujard, and Ivan Yakovlev.
\newblock Volumes of odd strata of quadratic differentials, 2025.

\bibitem[DGZZ21]{DGZZ21}
Vincent Delecroix, \'Elise Goujard, Peter Zograf, and Anton Zorich.
\newblock Masur-{V}eech volumes, frequencies of simple closed geodesics, and
  intersection numbers of moduli spaces of curves.
\newblock {\em Duke Math. J.}, 170(12):2633--2718, 2021.

\bibitem[DN09]{DNoWei}
Norman Do and Paul Norbury.
\newblock Weil-{P}etersson volumes and cone surfaces.
\newblock {\em Geom. Dedicata}, 141:93--107, 2009.

\bibitem[Doz25]{D25}
Benjamin Dozier.
\newblock Compactifications of strata of differentials, 2025.

\bibitem[Dur24]{D24}
Eduard Duryev.
\newblock Teichm\"uller curves in genus two: square-tiled surfaces and modular
  curves.
\newblock {\em Geom. Topol.}, 28(9):3973--4056, 2024.

\bibitem[EKZ14]{EKZ14}
Alex Eskin, Maxim Kontsevich, and Anton Zorich.
\newblock Sum of {L}yapunov exponents of the {H}odge bundle with respect to the
  {T}eichm\"uller geodesic flow.
\newblock {\em Publ. Math. Inst. Hautes \'Etudes Sci.}, 120:207--333, 2014.

\bibitem[EM01]{EM01}
Alex Eskin and Howard Masur.
\newblock Asymptotic formulas on flat surfaces.
\newblock {\em Ergodic Theory Dynam. Systems}, 21(2):443--478, 2001.

\bibitem[EM18]{EM18}
Alex Eskin and Maryam Mirzakhani.
\newblock Invariant and stationary measures for the {${\rm SL}(2,\mathbb R)$}
  action on moduli space.
\newblock {\em Publ. Math. Inst. Hautes \'Etudes Sci.}, 127:95--324, 2018.

\bibitem[EMM15]{EMM15}
Alex Eskin, Maryam Mirzakhani, and Amir Mohammadi.
\newblock Isolation, equidistribution, and orbit closures for the {${\rm
  SL}(2,\mathbb R)$} action on moduli space.
\newblock {\em Ann. of Math. (2)}, 182(2):673--721, 2015.

\bibitem[EMMW20]{EMMW20}
Alex Eskin, Curtis~T. McMullen, Ronen~E. Mukamel, and Alex Wright.
\newblock Billiards, quadrilaterals and moduli spaces.
\newblock {\em J. Amer. Math. Soc.}, 33(4):1039--1086, 2020.

\bibitem[EMZ03]{EMM03}
Alex Eskin, Howard Masur, and Anton Zorich.
\newblock Moduli spaces of abelian differentials: the principal boundary,
  counting problems, and the {S}iegel-{V}eech constants.
\newblock {\em Publ. Math. Inst. Hautes \'Etudes Sci.}, (97):61--179, 2003.

\bibitem[Eng21]{E21}
Philip Engel.
\newblock Hurwitz theory of elliptic orbifolds, {I}.
\newblock {\em Geom. Topol.}, 25(1):229--274, 2021.

\bibitem[EO01]{EO01}
Alex Eskin and Andrei Okounkov.
\newblock Asymptotics of numbers of branched coverings of a torus and volumes
  of moduli spaces of holomorphic differentials.
\newblock {\em Invent. Math.}, 145(1):59--103, 2001.

\bibitem[EO06]{EO06}
Alex Eskin and Andrei Okounkov.
\newblock Pillowcases and quasimodular forms.
\newblock In {\em Algebraic geometry and number theory}, volume 253 of {\em
  Progr. Math.}, pages 1--25. Birkh\"auser Boston, Boston, MA, 2006.

\bibitem[EOP08]{EOP08}
Alex Eskin, Andrei Okounkov, and Rahul Pandharipande.
\newblock The theta characteristic of a branched covering.
\newblock {\em Adv. Math.}, 217(3):873--888, 2008.

\bibitem[ES18]{ES18}
Philip Engel and Peter Smillie.
\newblock The number of convex tilings of the sphere by triangles, squares, or
  hexagons.
\newblock {\em Geom. Topol.}, 22(5):2839--2864, 2018.

\bibitem[ET23]{ETu2Dd}
Lorenz Eberhardt and Gustavo~J. Turiaci.
\newblock 2d dilaton gravity and the weil-petersson volumes with conical
  defects, 2023.

\bibitem[EZ15]{EZ15}
Alex Eskin and Anton Zorich.
\newblock Volumes of strata of {A}belian differentials and {S}iegel-{V}eech
  constants in large genera.
\newblock {\em Arnold Math. J.}, 1(4):481--488, 2015.

\bibitem[Fil16]{F16}
Simion Filip.
\newblock Splitting mixed {H}odge structures over affine invariant manifolds.
\newblock {\em Ann. of Math. (2)}, 183(2):681--713, 2016.

\bibitem[Fil18]{F18}
Simion Filip.
\newblock Families of {K}3 surfaces and {L}yapunov exponents.
\newblock {\em Israel J. Math.}, 226(1):29--69, 2018.

\bibitem[Fil20]{F20}
Simion Filip.
\newblock Counting special {L}agrangian fibrations in twistor families of {K}3
  surfaces.
\newblock {\em Ann. Sci. \'Ec. Norm. Sup\'er. (4)}, 53(3):713--750, 2020.
\newblock With an appendix by Nicolas Bergeron and Carlos Matheus.

\bibitem[FP18]{FP18}
Gavril Farkas and Rahul Pandharipande.
\newblock The moduli space of twisted canonical divisors.
\newblock {\em J. Inst. Math. Jussieu}, 17(3):615--672, 2018.

\bibitem[Fu25]{F25}
Kai Fu.
\newblock Siegel-veech measures of convex flat cone spheres, 2025.

\bibitem[Gen18]{G18}
Quentin Gendron.
\newblock The {D}eligne-{M}umford and the incidence variety compactifications
  of the strata of {$\Omega\mathcal M_g$}.
\newblock {\em Ann. Inst. Fourier (Grenoble)}, 68(3):1169--1240, 2018.

\bibitem[GMN13]{GMN13}
Davide Gaiotto, Gregory~W. Moore, and Andrew Neitzke.
\newblock Wall-crossing, {H}itchin systems, and the {WKB} approximation.
\newblock {\em Adv. Math.}, 234:239--403, 2013.

\bibitem[Gol86]{GolInv}
William~M. Goldman.
\newblock Invariant functions on {L}ie groups and {H}amiltonian flows of
  surface group representations.
\newblock {\em Invent. Math.}, 85(2):263--302, 1986.

\bibitem[Gou15]{Gou15}
Elise Goujard.
\newblock Siegel-{V}eech constants for strata of moduli spaces of quadratic
  differentials.
\newblock {\em Geom. Funct. Anal.}, 25(5):1440--1492, 2015.

\bibitem[Gou16]{Gou16}
Elise Goujard.
\newblock Volumes of strata of moduli spaces of quadratic differentials:
  getting explicit values.
\newblock {\em Ann. Inst. Fourier (Grenoble)}, 66(6):2203--2251, 2016.

\bibitem[GP17]{GP17}
Selim Ghazouani and Luc Pirio.
\newblock Moduli spaces of flat tori with prescribed holonomy.
\newblock {\em Geom. Funct. Anal.}, 27(6):1289--1366, 2017.

\bibitem[GP20]{GP20}
S\'elim Ghazouani and Luc Pirio.
\newblock Moduli spaces of flat tori and elliptic hypergeometric functions.
\newblock {\em M\'em. Soc. Math. Fr. (N.S.)}, (164):viii+183, 2020.

\bibitem[Gru01]{Grushevsky}
Samuel Grushevsky.
\newblock An explicit upper bound for {W}eil-{P}etersson volumes of the moduli
  spaces of punctured {R}iemann surfaces.
\newblock {\em Math. Ann.}, 321(1):1--13, 2001.

\bibitem[GS24]{gonchar}
Alexander~B. Goncharov and Zhe Sun.
\newblock Exponential volumes of moduli spaces of hyperbolic surfaces, 2024.

\bibitem[Gu{\'{e}}16]{G16}
J{\'{e}}r{\'{e}}my Gu{\'{e}}r{\'{e}}.
\newblock A generalization of the double ramification cycle via log-geometry,
  2016.

\bibitem[Har74]{HarCha}
William Harvey.
\newblock Chabauty spaces of discrete groups.
\newblock In {\em Discontinuous groups and {R}iemann surfaces ({P}roc. {C}onf.,
  {U}niv. {M}aryland, {C}ollege {P}ark, {M}d., 1973)}, volume No. 79 of {\em
  Ann. of Math. Stud.}, pages 239--246. Princeton Univ. Press, Princeton, NJ,
  1974.

\bibitem[Has03]{HasMod}
Brendan Hassett.
\newblock Moduli spaces of weighted pointed stable curves.
\newblock {\em Adv. Math.}, 173(2):316--352, 2003.

\bibitem[HK14]{HKoAna}
John~H. Hubbard and Sarah Koch.
\newblock An analytic construction of the {D}eligne-{M}umford compactification
  of the moduli space of curves.
\newblock {\em J. Differential Geom.}, 98(2):261--313, 2014.

\bibitem[HKK17]{HKK17}
Fabian Haiden, Ludmil Katzarkov, and Maxim Kontsevich.
\newblock Flat surfaces and stability structures.
\newblock {\em Publ. Math. Inst. Hautes \'Etudes Sci.}, 126:247--318, 2017.

\bibitem[HM25]{HM25}
Johannes Horn and Martin Möller.
\newblock Wall-crossing formulas via spectral networks, 2025.

\bibitem[HT25a]{HideThomas}
Will Hide and Joe Thomas.
\newblock Large-{$n$} asymptotics for {W}eil-{P}etersson volumes of moduli
  spaces of bordered hyperbolic surfaces.
\newblock {\em Comm. Math. Phys.}, 406(9):Paper No. 203, 41, 2025.

\bibitem[HT25b]{huangtel}
Yi~Huang and Ivan Telpukhovskiy.
\newblock Moduli spaces of open strings have polylogarithmic mirzakhani
  volumes, 2025.

\bibitem[Hua25]{Huang}
Xuanyu Huang.
\newblock Asymptotic coefficients of weil-petersson volumes in the large genus,
  2025.

\bibitem[Kaz22]{K22}
Maxim Kazarian.
\newblock Recursion for {M}asur-{V}eech volumes of moduli spaces of quadratic
  differentials.
\newblock {\em J. Inst. Math. Jussieu}, 21(4):1471--1476, 2022.

\bibitem[Kim20]{Kimura}
Yusuke Kimura.
\newblock J{T} gravity and the asymptotic {W}eil-{P}etersson volume.
\newblock {\em Phys. Lett. B}, 811:135989, 6, 2020.

\bibitem[KN18]{KN18}
Vincent Koziarz and Duc-Manh Nguyen.
\newblock Complex hyperbolic volume and intersection of boundary divisors in
  moduli spaces of pointed genus zero curves.
\newblock {\em Ann. Sci. \'Ec. Norm. Sup\'er. (4)}, 51(6):1549--1597, 2018.

\bibitem[KN21]{KN21}
Vincent Koziarz and Duc-Manh Nguyen.
\newblock Variation of {H}odge structure and enumerating tilings of surfaces by
  triangles and squares.
\newblock {\em J. \'Ec. polytech. Math.}, 8:831--857, 2021.

\bibitem[Kon92]{Kont92}
Maxim Kontsevich.
\newblock Intersection theory on the moduli space of curves and the matrix
  {A}iry function.
\newblock {\em Comm. Math. Phys.}, 147(1):1--23, 1992.

\bibitem[Kon97]{K97}
Maxim Kontsevich.
\newblock Lyapunov exponents and {H}odge theory.
\newblock In {\em The mathematical beauty of physics ({S}aclay, 1996)},
  volume~24 of {\em Adv. Ser. Math. Phys.}, pages 318--332. World Sci. Publ.,
  River Edge, NJ, 1997.

\bibitem[KS08]{KS08}
Maxim Kontsevich and Yan Soibelman.
\newblock Stability structures, motivic donaldson-thomas invariants and cluster
  transformations, 2008.

\bibitem[KT99]{KT99}
Yasuhiko Kamiyama and Michishige Tezuka.
\newblock Symplectic volume of the moduli space of spatial polygons.
\newblock {\em J. Math. Kyoto Univ.}, 39(3):557--575, 1999.

\bibitem[KW24]{KW24}
Omar Kidwai and Nicholas~J. Williams.
\newblock {D}onaldson-{T}homas invariants for the {B}ridgeland-{S}mith
  correspondence, 2024.

\bibitem[KZ03]{KZ03}
Maxim Kontsevich and Anton Zorich.
\newblock Connected components of the moduli spaces of {A}belian differentials
  with prescribed singularities.
\newblock {\em Invent. Math.}, 153(3):631--678, 2003.

\bibitem[Lan08]{L08}
Erwan Lanneau.
\newblock Connected components of the strata of the moduli spaces of quadratic
  differentials.
\newblock {\em Ann. Sci. \'Ec. Norm. Sup\'er. (4)}, 41(1):1--56, 2008.

\bibitem[LW24]{LipWright}
Michael Lipnowski and Alex Wright.
\newblock Towards optimal spectral gaps in large genus.
\newblock {\em Annals of Probability}, 52(2):545--575, 2024.
\newblock Published March 2024.

\bibitem[Mas76]{MasExt}
Howard Masur.
\newblock Extension of the {W}eil-{P}etersson metric to the boundary of
  {T}eichmuller space.
\newblock {\em Duke Math. J.}, 43(3):623--635, 1976.

\bibitem[Mas82]{M82}
Howard Masur.
\newblock Interval exchange transformations and measured foliations.
\newblock {\em Ann. of Math. (2)}, 115(1):169--200, 1982.

\bibitem[McM17]{M17}
Curtis~T. McMullen.
\newblock The {G}auss-{B}onnet theorem for cone manifolds and volumes of moduli
  spaces.
\newblock {\em Amer. J. Math.}, 139(1):261--291, 2017.

\bibitem[McO88]{McOPoi}
Robert~C. McOwen.
\newblock Point singularities and conformal metrics on {R}iemann surfaces.
\newblock {\em Proc. Amer. Math. Soc.}, 103(1):222--224, 1988.

\bibitem[Mir07a]{MirSim}
Maryam Mirzakhani.
\newblock Simple geodesics and {W}eil-{P}etersson volumes of moduli spaces of
  bordered {R}iemann surfaces.
\newblock {\em Invent. Math.}, 167(1):179--222, 2007.

\bibitem[Mir07b]{MirWei}
Maryam Mirzakhani.
\newblock Weil-{P}etersson volumes and intersection theory on the moduli space
  of curves.
\newblock {\em J. Amer. Math. Soc.}, 20(1):1--23, 2007.

\bibitem[Mir13]{MirGrowth}
Maryam Mirzakhani.
\newblock Growth of {W}eil-{P}etersson volumes and random hyperbolic surfaces
  of large genus.
\newblock {\em J. Differential Geom.}, 94(2):267--300, 2013.

\bibitem[MMW17]{MMW17}
Curtis~T. McMullen, Ronen~E. Mukamel, and Alex Wright.
\newblock Cubic curves and totally geodesic subvarieties of moduli space.
\newblock {\em Ann. of Math. (2)}, 185(3):957--990, 2017.

\bibitem[Mon11]{Mo11}
Gabriele Mondello.
\newblock Riemann surfaces with boundary and natural triangulations of the
  {T}eichm\"uller space.
\newblock {\em J. Eur. Math. Soc. (JEMS)}, 13(3):635--684, 2011.

\bibitem[MP19]{MirPetri}
Maryam Mirzakhani and Bram Petri.
\newblock Lengths of closed geodesics on random surfaces of large genus.
\newblock {\em Comment. Math. Helv.}, 94(4):869--889, 2019.

\bibitem[MTT20]{MTT20}
Martin M\"oller and David Torres-Teigell.
\newblock Euler characteristics of {G}othic {T}eichm\"uller curves.
\newblock {\em Geom. Topol.}, 24(3):1149--1210, 2020.

\bibitem[Mum77]{M77}
David Mumford.
\newblock Stability of projective varieties.
\newblock {\em Enseign. Math. (2)}, 23(1-2):39--110, 1977.

\bibitem[MUW21]{MMW21}
Martin M\"oller, Martin Ulirsch, and Annette Werner.
\newblock Realizability of tropical canonical divisors.
\newblock {\em J. Eur. Math. Soc. (JEMS)}, 23(1):185--217, 2021.

\bibitem[MW17]{MW17}
Maryam Mirzakhani and Alex Wright.
\newblock The boundary of an affine invariant submanifold.
\newblock {\em Invent. Math.}, 209(3):927--984, 2017.

\bibitem[MZ00]{ManinZograf}
Yuri~I. Manin and Peter Zograf.
\newblock Invertible cohomological field theories and {W}eil-{P}etersson
  volumes.
\newblock {\em Ann. Inst. Fourier (Grenoble)}, 50(2):519--535, 2000.

\bibitem[MZ15]{MirZog}
Maryam Mirzakhani and Peter Zograf.
\newblock Towards large genus asymptotics of intersection numbers on moduli
  spaces of curves.
\newblock {\em Geom. Funct. Anal.}, 25(4):1258--1289, 2015.

\bibitem[Ngu22]{N22}
Duc-Manh Nguyen.
\newblock Volume forms on moduli spaces of {$d$}-differentials.
\newblock {\em Geom. Topol.}, 26(7):3173--3220, 2022.

\bibitem[Ngu24]{N24}
Duc-Manh Nguyen.
\newblock Intersection theory and volumes of moduli spaces of flat metrics on
  the sphere.
\newblock {\em Geom. Dedicata}, 218(2):Paper No. 32, 42, 2024.
\newblock With an appendix by Nguyen and Vincent Koziarz.

\bibitem[Ngu25]{N25}
Duc-Manh Nguyen.
\newblock On the volumes of linear subvarieties in moduli spaces of
  projectivized {A}belian differentials.
\newblock {\em Math. Ann.}, 391(1):937--964, 2025.

\bibitem[NN98]{NNAWei}
Marjatta N\"a\"at\"anen and Toshihiro Nakanishi.
\newblock Weil-{P}etersson areas of the moduli spaces of tori.
\newblock {\em Results Math.}, 33(1-2):120--133, 1998.

\bibitem[OP09]{OP09}
A.~Okounkov and R.~Pandharipande.
\newblock Gromov-{W}itten theory, {H}urwitz numbers, and matrix models.
\newblock In {\em Algebraic geometry---{S}eattle 2005. {P}art 1}, volume 80,
  Part 1 of {\em Proc. Sympos. Pure Math.}, pages 325--414. Amer. Math. Soc.,
  Providence, RI, 2009.

\bibitem[Pen92]{PenDiff}
R.~C. Penner.
\newblock Weil-{P}etersson volumes.
\newblock {\em J. Differential Geom.}, 35(3):559--608, 1992.

\bibitem[Sau18]{S18}
Adrien Sauvaget.
\newblock Volumes and {S}iegel-{V}eech constants of {$\mathcal{H}(2g-2)$} and
  {H}odge integrals.
\newblock {\em Geom. Funct. Anal.}, 28(6):1756--1779, 2018.

\bibitem[Sau21]{S21}
Adrien Sauvaget.
\newblock The large genus asymptotic expansion of {M}asur-{V}eech volumes.
\newblock {\em Int. Math. Res. Not. IMRN}, (20):15894--15910, 2021.

\bibitem[Sau24a]{Sauv}
Adrien Sauvaget.
\newblock A flat perspective on moduli spaces of hyperbolic surfaces, 2024.

\bibitem[Sau24b]{S24}
Adrien Sauvaget.
\newblock Volumes of moduli spaces of flat surfaces, 2024.

\bibitem[Sau25]{S25}
Adrien Sauvaget.
\newblock Virtual volumes of strata of meromorphic differentials with simple
  poles, 2025.

\bibitem[ST01]{STrInt}
Georg Schumacher and Stefano Trapani.
\newblock Estimates of {W}eil-{P}etersson volumes via effective divisors.
\newblock {\em Comm. Math. Phys.}, 222(1):1--7, 2001.

\bibitem[ST05]{STrVar}
Georg Schumacher and Stefano Trapani.
\newblock Variation of cone metrics on {R}iemann surfaces.
\newblock {\em J. Math. Anal. Appl.}, 311(1):218--230, 2005.

\bibitem[ST11]{STrWei}
Georg Schumacher and Stefano Trapani.
\newblock Weil-{P}etersson geometry for families of hyperbolic conical
  {R}iemann surfaces.
\newblock {\em Michigan Math. J.}, 60(1):3--33, 2011.

\bibitem[Thu98]{T98}
William~P. Thurston.
\newblock Shapes of polyhedra and triangulations of the sphere.
\newblock In {\em The {E}pstein birthday schrift}, volume~1 of {\em Geom.
  Topol. Monogr.}, pages 511--549. Geom. Topol. Publ., Coventry, 1998.

\bibitem[Tro86]{T86}
Marc Troyanov.
\newblock Les surfaces euclidiennes \`a{} singularit\'es coniques.
\newblock {\em Enseign. Math. (2)}, 32(1-2):79--94, 1986.

\bibitem[TT20]{TT20}
David Torres-Teigell.
\newblock Masur-{V}eech volume of the gothic locus.
\newblock {\em J. Lond. Math. Soc. (2)}, 102(1):405--436, 2020.

\bibitem[TT22]{TT22}
Michael Temkin and Ilya Tyomkin.
\newblock Reduction and lifting problem for differential forms on {B}erkovich
  curves.
\newblock {\em Adv. Math.}, 397:Paper No. 108208, 23, 2022.

\bibitem[TWZ06]{TWZGen}
Ser~Peow Tan, Yan~Loi Wong, and Ying Zhang.
\newblock Generalizations of {M}c{S}hane's identity to hyperbolic
  cone-surfaces.
\newblock {\em J. Differential Geom.}, 72(1):73--112, 2006.

\bibitem[Vee82]{V82}
William~A. Veech.
\newblock Gauss measures for transformations on the space of interval exchange
  maps.
\newblock {\em Ann. of Math. (2)}, 115(1):201--242, 1982.

\bibitem[Vee93]{V93}
William~A. Veech.
\newblock Flat surfaces.
\newblock {\em Amer. J. Math.}, 115(3):589--689, 1993.

\bibitem[vIS25]{IS25}
Jan-Willem van Ittersum and Adrien Sauvaget.
\newblock Cylinder counts and spin refinement of area {S}iegel-{V}eech
  constants.
\newblock {\em Comment. Math. Helv.}, 100(3):559--617, 2025.

\bibitem[Wei58]{WeiMod}
Andr\'e Weil.
\newblock Modules des surfaces de {R}iemann.
\newblock In {\em S\'eminaire {B}ourbaki; 10e ann\'ee: 1957/1958}, page~7.
  Secr\'etariat math\'ematique, Paris, 1958.
\newblock Textes des conf\'erences; Expos\'es 152 \`a{} 168; 2e \'ed
  corrig\'ee, Expos\'e{} 168.

\bibitem[Wit91]{WitQua}
Edward Witten.
\newblock On quantum gauge theories in two dimensions.
\newblock {\em Comm. Math. Phys.}, 141(1):153--209, 1991.

\bibitem[Wol85]{WolWei}
Scott Wolpert.
\newblock On the {W}eil-{P}etersson geometry of the moduli space of curves.
\newblock {\em Amer. J. Math.}, 107(4):969--997, 1985.

\bibitem[WX22]{WuXue316}
Yunhui Wu and Yuhao Xue.
\newblock Random hyperbolic surfaces of large genus have first eigenvalues
  greater than $3/16 - \varepsilon$.
\newblock {\em Geometric and Functional Analysis}, 33(2), 2022.

\bibitem[YZZ20]{YZZ20}
Di~Yang, Don Zagier, and Youjin Zhang.
\newblock Masur-{V}eech volumes of quadratic differentials and their
  asymptotics.
\newblock {\em J. Geom. Phys.}, 158:103870, 12, 2020.

\bibitem[Zog20]{Zog}
Peter Zograf.
\newblock On the large genus asymptotics of {W}eil-{P}etersson volumes, 2020.

\end{thebibliography}

\end{document}